\documentclass[12pt,twoside,reqno]{amsart}
\linespread{1.05}
\usepackage[colorlinks=true,citecolor=blue]{hyperref}
\usepackage{mathptmx, amsmath, amssymb, amsfonts, amsthm, mathptmx, enumerate, color}
\setlength{\textheight}{23cm}
\setlength{\textwidth}{16cm}
\setlength{\oddsidemargin}{0cm}
\setlength{\evensidemargin}{0cm}
\setlength{\topmargin}{0cm}
\usepackage{graphicx}
\usepackage{epstopdf}

\newtheorem{proposition}{Proposition}[section]
\theoremstyle{definition}

\theoremstyle{plain}
\newtheorem{Thm}{Theorem}
\newtheorem{Lem}{Lemma}
\newtheorem{Def}{Definition}
\newtheorem{Cor}{Corollary}
\numberwithin{equation}{section}

\theoremstyle{definition}

\def\re{\mathbb R}
\def\na{\mathbb N}
\def\N{\mathbb N}

\def\lV{\left\Vert }
\def\rV{\right\Vert }
\def\lv{\left\vert }
\def\rv{\right\vert}
\def\al{\alpha}
\def\be{\beta}

\usepackage[english]{babel}
\usepackage{graphicx}
\usepackage{framed}
\usepackage[normalem]{ulem}
\usepackage{amsmath}
\usepackage{amsthm}
\usepackage{amssymb}
\usepackage{amsfonts}
\usepackage{enumerate}
\usepackage{multirow}
\usepackage{booktabs,siunitx,makecell}
\usepackage[referable]{threeparttablex}
\usepackage[shortlabels]{enumitem}
\usepackage[utf8]{inputenc}
\usepackage[top=1 in,bottom=1in, left=1 in, right=1 in]{geometry}
\usepackage{lettrine}
\usepackage{comment}
\usepackage{cases}
\usepackage{hyperref}
\usepackage{url}

\begin{document}
\setcounter{page}{1}

\vspace*{1.0cm}
\title[A circumcenter-reflection method for FPP]
{A circumcentered-reflection method for finding common fixed points of firmly nonexpansive operators}

\author[R. Arefidamghani, R. Behling, A.N. Iusem, L.-R. Santos]{R. Arefidamghani$^1$
R. Behling$^2$, A.N. Iusem$^{3,*}$, L.-R. Santos$^4$}
\maketitle

\begin{center}
{\footnotesize {\it
$^1$Instituto de Matem\'atica Pura e Aplicada,
Estrada Dona Castorina 110, Jardim Bot\^anico, CEP 22460-320, Rio de Janeiro, RJ,
Brazil\\
$^2$School of Applied Mathematics, Funda\c c\~ao Get\'ulio Vargas,
Rio de Janeiro, Brazil\\
$^3$Instituto de Matem\'atica Pura e Aplicada,
Estrada Dona Castorina 110, Jardim Bot\^anico, CEP 22460-320, Rio de Janeiro, RJ
Brazil\\
$^4$Department of Mathematics,
Federal University of Santa Catarina,
Blumenau, SC, Brazil}}
\end{center}

\bigskip
\medskip

\centerline{\bf Honoring Prof. Yair Censor in his 80th birthday}

\bigskip

\vskip 4mm {\small\noindent {\bf Abstract.}
The circumcentered-reflection method (CRM) has been recently proposed as a methodology for accelerating several 
algorithms for solving the Convex Feasibility Problem (CFP), equivalent to finding a common fixed-point of the
orthogonal projections onto a finite number of closed and convex sets. In this paper, we apply CRM 
to the more general Fixed Point Problem (denoted as FPP), consisting of finding
a common fixed-point of operators belonging to a larger family of operators, namely firmly nonexpansive operators.
We prove than in this setting, CRM is globally convergent to a common fixed-point (supposing at least one exists).
We also establish linear convergence of the sequence generated by CRM applied to FPP, under a not too
demanding error bound assumption, and provide an estimate
of the asymptotic constant. We provide solid numerical evidence of the superiority of CRM when compared to the
classical Parallel Projections Method (PPM). Additionally, we present certain results of convex combination of orthogonal 
projections, of some interest on its own.

\medskip

\noindent{\bf Keywords}: 
Common fixed points,
Firmly nonexpansive operators,
Circumcentered-reflection method, 
Alternating projections,
Convergence rate,
Error bound.

\medskip

\renewcommand{\thefootnote}{}
\footnotetext{ $^*$Corresponding author.
\par
E-mail addresses: reza.arefidamghani@immpa.br (R. Arefidamghani), rogerbehling@gmail.com (R. Behling), 
iusp@impa.br (A.N. Iusem), l.r.santos@ufsc.br (L.-R. Santos).
\par
Received ; Accepted}
\section{Introduction}\label{s1}

We start by recalling the Convex Feasibility Problem (CFP), which consists of 
finding a point in the intersection of a finite number of closed convex subsets of $\re^n$. CFP is clearly equivalent to solving a finite system
of convex inequalities in $\re^n$, and it can be also rephrased as the problem of finding a common fixed-point of the orthogonal projections
onto such subsets. A natural extension of CFP is the problem of finding a common fixed-point of a finite set of operators other than 
orthogonal projections, but sharing some of their properties. A vast literature on the subject has been developed; we cite just a few
references, namely \cite{CeS}, \cite{Mou}, \cite{YLY} and \cite{ZhH}. 
In this paper we will consider a particular generalization of orthogonal projections, namely {\it firmly nonexpansive operators}.

We define next this family of operators, together with two related families.

\begin{Def}\label{d1}
An operator $T:\re^n\to\re^n$ is said to be:
\begin{itemize}
\item[i)]  {\rm nonexpansive} when $\lV T(x)-T(y)\rV\le\lV x-y\rV$ for all $x,y\in\re^n$.
\item[ii)] {\rm nonexpansive plus} when it is nonexpansive, and whenever  
$\lV T(x)-T(y)\rV=\lV x-y\rV$ it holds that $T(x)-T(y)=x-y$.
\item[iii)] {\rm firmly nonexpansive} when
\begin{equation}\label{e1}
\lV T(x)-T(y)\rV^2\le\lV x-y\rV^2-\lV (T(x)-T(y))-(x-y)\rV^2
\end{equation} 
for all $x,y\in\re^n$.
\end{itemize}
\end{Def}

It is immediate that firmly nonexpansive operators are nonexpansive plus, and nonexpansive plus operators are nonexpansive.
It is well known and easy to prove that orthogonal projections onto closed and convex sets are firmly nonexpansive. 
The notation {\it nonexpansive plus} is not standard; we adopt it because of 
the analogy with copositive plus matrices.

Let $T_1, \dots ,T_m:\re^n\to\re^n$ be firmly nonexpansive operators. The problem of finding a common fixed-point of $T_1, \dots, T_m$
(i.e., a point $\bar x\in\re^n$ such that $T_i(\bar x)=\bar x$ for all $i\in\{1, \dots ,m\}$) will be denoted as FPP. 
The set of common fixed-points of the $T_i$'s will be denoted as Fix$(T_1, \dots ,T_m)$.  
Two classical methods for FPP are the Sequential Projection
Method (SPM) and the Parallel Projection Method (PPM), which can be traced
back to \cite{Kac}, \cite{Cim} respectively, and are defined as follows. 
Consider the operators $\widehat T,
\overline T:\re^n\to\re^n$ given by $\widehat T=T_m\circ\dots\circ T_1$,
$\overline T=\frac{1}{m}\sum_{i=1}^m T_i$. Starting from an arbitrary $x^0\in\re^n$,
SPM and PPM generate  sequences $\{x^k\}$ given by $x^{k+1}=\widehat T(x^k)$,
$x^{k+1}=\overline T(x^k)$ respectively. When Fix$(T_1, \dots T_m)\ne\emptyset$
the sequences generated by both
methods are known to be globally convergent to points belonging to a point in Fix$(T_1, \dots, T_m)$,
i.e., to solve FPP.  
See \cite{CeZ} for an in-depth study of these and other 
projections methods for FPP. 

 An interesting relation between SPM and PPM was
found in \cite{Pie}. Given firmly nonexpansive operators $T_1, \dots, T_m:\re^n\to\re^n$, define the operator
$\widetilde T:\re^{nm}\to\re^{nm}$ as $\widetilde T(x^1, \dots ,x^m)=(T_1(x^1), \dots ,T_M(x^m))$,
with $x^i\in\re^n$ ($1\le i\le m)$. It is rather immediate to check that $\widetilde T$ is firmly nonexpansive. Consider the set
$\widetilde U=\{(x,\dots,x): x\in\re^n\}\subset\re^{nm}$, and let $P_{\widetilde U}:\re^{nm}\to\widetilde U$
be the orthogonal projection onto $\widetilde U$.   
Define $\{\bar x^k\}\subset\re^{nm}$ as the sequence resulting from applying SPM, as defined above, 
to the operators $\widetilde T, P_{\widetilde U}$, starting from a point $\bar x^0=(x^0, \cdots ,x^0)\in\widetilde U$,
i.e., take $\bar x^{k+1}=P_{\widetilde U}(\widetilde T(\bar x^k))$. Clearly, $\bar x^k$ belongs to $\widetilde U$
for all $k$, so that we may write $\bar x^k=(x^k,\dots,x^k)$ with $x^k\in\re^n$. It was proved in 
\cite{Pie} that $x^{k+1}=\overline T(x^k)$, i.e., a step of SPM applied to two specific firmly nonexpansive operators in
the product space $\re^{nm}$ is equivalent to a step of PPM in the original space $\re^n$.
Thus, SPM with just two operators plays a sort of special role, and deserves a name of its own. We
will call it the {\it Method of Alternating Projections} (MAP from now on). 
Observe that in the equivalence above one of the two sets in the product space, 
namely $\widetilde U$, is a linear subspace.
This fact will be essential for the convergence of the Circumcentered-Reflection Method (CRM from now on),
applied for solving FPP. 

We reckon that the use of the word ``projections" in the names of SPM, PPM and MAP applied to FPP is an abuse of notation,
since in general there are no projections involved in FPP. Indeed, they correspond to these methods applied to CFP, a particular case of FPP.
We keep them because the structure of the methods applied to either CFP and FPP is basically the same.  

We proceed to describe CRM.
Take three non-collinear points $x,y,z\in\re^n$, and let $M$ be their affine hull. The {\it circumcenter} circ$(x,y,z)$ 
is the center of the circle in $M$
passing through $x,y,z$ (or, equivalently, the point in $M$ equidistant from $x,y,z$). 
It is easy to check that circ$(x,y,z)$ is well defined. Now we take two firmly nonexpansive operators $A,B:\re^n\to\re^n$ and define $Q=A\circ B$.
Under adequate assumptions, the sequence $\{x^k\}\subset\re^n$ defined by 
\begin{equation}\label{e0}
x^{k+1}=Q(x^k)=A(B(x^k))
\end{equation}
is expected
to converge to a common fixed-point of $A$ and $B$. Note that, if $A,B$ are orthogonal projections onto
convex sets $K_1, K_2$, then MAP turns out to be a special case of this iteration, and
Fix$(A,B)=K_1\cap K_2$. CRM can be seen as an acceleration technique 
for the sequence defined by \eqref{e0}. Define the reflection operators $A^R, B^R:\re^n\to\re^n$ as $A^R=2A-I, B^R=2B-I$,
where $I$ stands for the identity operator in $\re^n$. The CRM operator $C:\re^n\to\re^n$ is defined
as 
$C(x)= $circ$(x, B^R(x), A^R(B^R(x)))$,
i.e., the circumcenter of the points $x, B^R(x),A^R(B^R(x))$. The CRM sequence $\{x^k\}\subset\re^n$,
starting at some $x^0\in\re^n$, is then defined as
$x^{k+1}=C(x^k)$.

CRM was introduced in \cite{BBS1}, \cite{BBS2} and has been successfully applied for accelerating several methods for solving CFP, like MAP, PPM and
the Douglas-Rachford Method (DRM), outperforming all of them. It was further enhanced in \cite{AABBIS},
\cite{ABBIS}, 
\cite{BOW1}, \cite{BOW2}, \cite{BOW3}, \cite{BOW4}, \cite{BOW5},\cite{BBS3}, \cite{BBS4}, \cite{DHL1}, 
\cite{DHL2} and \cite{Ouy}.  CRM was shown in \cite{BBS2} to converge to a solution of CFP. In \cite{ABBIS} it was proved that, 
under a not too demanding error bound condition, the sequences generated by
MAP and CRM for solving CFP converge linearly, but the asymptotic constant for CRM is better than the one for MAP. 
This superiority was widely confirmed in the numerical experiences exhibited in \cite{AABBIS}. 

Here, we will apply CRM for solving FPP with firmly nonexpansive operators $T_1, \dots T_m:\re^n\to\re^n$ in the following way.
We will apply it to two operators in $\re^{nm}$, namely $\widetilde T$ and $P_{\widetilde U}$ as defined above, 
starting from a point in $\widetilde U$.
Note that, since $\widetilde U$ is a linear subspace, the operator $P_{\widetilde U}$ is affine.

The main purpose of this paper consists of establishing that CRM, when applied to FPP, is globally convergent, 
that linear convergence is achieved by both CRM and MAP under an error bound condition, and that 
CRM is computationally much faster than MAP, as corroborated by solid numerical evidence. 
We were not able to prove the
superiority of CRM in terms of the asymptotic constant of linear convergence, but our numerical experiments
suggest that a theoretical superiority is likely to hold. This issue is left as a subject for future
research. 

The paper is organized as follows. In Section \ref{s2} we present certain results, of some interest on its own,  on convex combinations 
of orthogonal projections, which we take as a prototypical family of firmly nonexpansive operators (beyond orthogonal projections themselves).
In Section \ref{s3} we prove global convergence of CRM applied for solving FPP.  We prove in Section \ref{s4}  that, under a reasonable error bound assumption, convergence of CRM applied for solving FPP is linear, and we provide as well an estimate of the asymptotic constant,  which holds also for MAP. In Section \ref{s5}
we present our numerical experiments which show that CRM categorically outperforms PPM. In these experiments, 
we use the family of firmly nonexpansive operators studied in Section \ref{s2}.   

\section{Some properties of firmly nonexpansive operators}\label{s2}

We start with some elementary properties of nonexpansive plus and firmly nonexpansive operators (see Definition \ref{d1}).    
		
\begin{proposition}\label{p1}
\begin{itemize} 
\item[i)] Compositions of nonexpansive plus operators are nonexpansive plus.
\item[ii)] Convex combinations of firmly nonexpansive operators are firmly nonexpansive.
\end{itemize}
\end{proposition}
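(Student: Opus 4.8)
The plan is to treat the two parts separately, in each case reducing the claim to a clean reformulation of the relevant defining inequality. For part (i), I would first observe that the composition is trivially nonexpansive: if $S,T$ are nonexpansive plus, then $\lV T(S(x))-T(S(y))\rV\le\lV S(x)-S(y)\rV\le\lV x-y\rV$ for all $x,y\in\re^n$. The substance is in the ``plus'' clause. Assuming the extremal equality $\lV T(S(x))-T(S(y))\rV=\lV x-y\rV$, the displayed chain of inequalities is squeezed into equalities, so that both $\lV T(S(x))-T(S(y))\rV=\lV S(x)-S(y)\rV$ and $\lV S(x)-S(y)\rV=\lV x-y\rV$ hold. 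Invoking the plus property of $T$ and then of $S$ gives $T(S(x))-T(S(y))=S(x)-S(y)$ and $S(x)-S(y)=x-y$, and chaining these two identities yields $T(S(x))-T(S(y))=x-y$, which is exactly what the plus property for $T\circ S$ demands. I expect no real obstacle here; the only point requiring care is extracting both intermediate equalities from the single extremal hypothesis via the squeeze.

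For part (ii), the key preliminary step is to rewrite firm nonexpansiveness \eqref{e1} in its standard equivalent bilinear form. Expanding $\lV (T(x)-T(y))-(x-y)\rV^2$ in \eqref{e1} and cancelling terms, one checks that \eqref{e1} is equivalent to the inner-product inequality $\langle T(x)-T(y),\, x-y\rangle\ge\lV T(x)-T(y)\rV^2$. I would establish this equivalence once at the outset, since it is the bilinear form, not the quadratic one, that interacts well with convex combinations.

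With that in hand, write $T=\sum_{i=1}^m\lambda_i T_i$ with $\lambda_i\ge 0$ and $\sum_{i=1}^m\lambda_i=1$, and set $u=x-y$ and $v_i=T_i(x)-T_i(y)$, so that $T(x)-T(y)=\sum_{i=1}^m\lambda_i v_i$. The reformulated inequality holds for each factor, $\langle v_i,u\rangle\ge\lV v_i\rV^2$. Taking the convex combination gives $\langle \sum_{i=1}^m\lambda_i v_i,\, u\rangle\ge\sum_{i=1}^m\lambda_i\lV v_i\rV^2$, while convexity of the map $w\mapsto\lV w\rV^2$ (Jensen's inequality) gives $\sum_{i=1}^m\lambda_i\lV v_i\rV^2\ge\lV \sum_{i=1}^m\lambda_i v_i\rV^2$. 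Combining the two and recalling $\sum_{i=1}^m\lambda_i v_i=T(x)-T(y)$ produces the bilinear inequality for $T$, hence, by the equivalence above, the firm nonexpansiveness of $T$.

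The main obstacle, to the extent there is one, lies in the reformulation step of part (ii): passing from the quadratic formulation \eqref{e1} to the bilinear one is what makes the argument linear under convex combinations, so it must be carried out correctly. Once this is done, the two ingredients—linearity of the inner product in its first argument and convexity of the squared norm—combine immediately, and I do not anticipate any further difficulty.
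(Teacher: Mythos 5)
Your proposal is correct and follows essentially the same route as the paper: the squeeze argument through the chain of nonexpansiveness inequalities for part (i), and for part (ii) the reformulation of \eqref{e1} as the bilinear inequality $\lV T(x)-T(y)\rV^2\le\langle T(x)-T(y),x-y\rangle$ combined with convexity of $\lV\cdot\rV^2$ and linearity of the inner product. No gaps.
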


\begin{proof}
\begin{itemize} 
\item[i)] Suppose that $S,T$ are nonexpansive plus operators. Then
\begin{equation}\label{e2}
\lV S(T(x))-S(T(y))\rV\le\lV T(x)-T(y)\rV\le\lV x-y\rV,
\end{equation}
by nonexpansiveness of $S,T$,
and if $\lV S(T(x))-S(T(y))\rV=\lV x-y\rV$, then equality holds throughout \eqref{e2},
so that, using the ``plus" property of $S,T$, we have
$S(T(x))-S(T(y))=T(x)-T(y)=x-y$, establishing the result.
\item[ii)] Take firmly nonexpansive operators $T_1\dots, T_m$ and nonnegative scalars $\al_1,\dots ,\al_m$ such that
$\sum_{i=1}^m\al_i=1$. Let $\overline T=\sum_{i=1}^m\al_iT_i$. We prove next that $\overline T$ is firmly nonexpansive.

Note that \eqref{e1} is equivalent to
\begin{equation}\label{e3}
\lV T(x)-T(y)\rV^2\le\langle T(x)-T(y), x-y\rangle.
\end{equation}
It suffices to check that $\overline T$ satisfies \eqref{e3}, and we proceed to do so.
$$
\lV\overline T(x)-\overline T(y)\rV^2=\lV\sum_{i=1}^m\al_i(T_i(x)-T_i(y))\rV^2\le
\sum_{i=1}^m\al_i\lV T_i(x)-T_i(y)\rV^2
$$
$$
\le\sum_{i=1}^m\al_i\langle T_i(x)-T_i(y),x-y\rangle
=\Big\langle\sum_{i=1}^m\al_i\left(T_i(x)-T_i(y)\right),x-y\Big\rangle=\langle\overline T(x)-\overline T(y),x-y\rangle,
$$
using the convexity
of $\lV\cdot\rV^2$ in the first inequality and the fact that the $T_i$'s satisfy \eqref{e3} 
in the second one.
\end{itemize}
\end{proof}

For an operator $T:\re^n\to\re^n$, we denote as $F(T)$
the set of its fixed points, i.e., $F(T)=\{x\in\re^n: T(x)=x\}$ (we comment that Fix$(\cdot, \cdot)$ denotes 
the set of common fixed points of two or more operators).
We will also need the following ``acute angle" property of firmly nonexpasive operators.

\begin{proposition}\label{pp1} 
Let $T:\re^n\to\re^n$ be a firmly nonexpansive operator. Then $0\ge\langle T(x)-y,T(x)-x\rangle$ for all $x\in\re^n$ and all
$y\in F(T)$.
\end{proposition}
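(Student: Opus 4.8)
The plan is to reduce everything to the equivalent formulation \eqref{e3} of firm nonexpansiveness, which was already established in the proof of Proposition \ref{p1}, and then to exploit the special feature that $y$ is a \emph{fixed} point, so that $T(y)=y$. First I would specialize \eqref{e3}, written for a generic pair of points, to the pair $(x,y)$ with $y\in F(T)$. Since $T(y)=y$, this immediately yields
\begin{equation}\label{e-plan}
\lV T(x)-y\rV^2\le\langle T(x)-y, x-y\rangle.
\end{equation}

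The key observation is then purely algebraic: the vector $T(x)-x$ appearing in the claimed inner product decomposes as
$$
T(x)-x=\bigl(T(x)-y\bigr)-\bigl(x-y\bigr).
$$
Substituting this into $\langle T(x)-y, T(x)-x\rangle$ and expanding bilinearly gives
$$
\langle T(x)-y, T(x)-x\rangle=\lV T(x)-y\rV^2-\langle T(x)-y, x-y\rangle.
$$

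Finally I would invoke \eqref{e-plan}: the right-hand side is exactly $\lV T(x)-y\rV^2$ minus a quantity that dominates it, hence it is nonpositive. This establishes $\langle T(x)-y, T(x)-x\rangle\le 0$, which is the desired inequality. There is essentially no obstacle here; the only step requiring a moment's thought is recognizing the decomposition $T(x)-x=(T(x)-y)-(x-y)$ that converts the target expression into a form to which the firm-nonexpansiveness inequality \eqref{e3} applies directly. Geometrically, this is the familiar ``acute angle'' fact that the displacement $T(x)-x$ makes a non-acute angle with the vector pointing from $T(x)$ toward any fixed point $y$.
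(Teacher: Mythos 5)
Your proof is correct and is essentially the argument the paper intends when it says the result is ``immediate from \eqref{e1}'': you simply spell out the details, using the equivalent form \eqref{e3} with $T(y)=y$ and the decomposition $T(x)-x=(T(x)-y)-(x-y)$. No gaps.
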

\begin{proof}
Immediate from \eqref{e1}.
\end{proof}

We continue by stating, for future reference, some elementary and well known properties of orthogonal projections onto closed and convex sets.

Let $\subset\re^n$ be closed and convex. The {\it orthogonal projection} $P_C:\re^n\to C$ is defined
as $P_C(x)={\rm argmin}_{y\in C} \lV x-y\rV$. 

\begin{proposition}\label{p2} 
If $C\subset\re^n$ is closed and convex, then
\begin{itemize}
\item[i)] $z=P_C(x)$ if and only if $\langle x-z,y-z\rangle\le 0$ for all $x\in\re^n$ and all $y\in C$.
\item[ii)] $P_C$ is firmly nonexpansive.
\item[iii)] $F\left(P_C\right)=C$.
\item[iv)] Take $x\in\re^n$ and let $z=P_C(x)$. Then, $P_C(z+\al (x-z))=P_C(x)$ for all $\al\ge 0$.
\item[v)] Define $h:\re^n\to\re$ as $h(x)=\lV x-P_C(x)\rV^2$. Then $h$ is continuously differentiable and
$\nabla h(x)=2\left(x-P_C(x)\right)$.
\end{itemize}
\end{proposition}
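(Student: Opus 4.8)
The plan is to prove part i) first, since it is the variational characterization of the projection from which ii), iii) and iv) follow almost mechanically, and which also underpins the subgradient computation in v). I take for granted that $P_C(x)$ is a well-defined single point: existence of a minimizer of $y\mapsto\lV x-y\rV$ on the closed set $C$ comes from coercivity, and uniqueness from strict convexity of $\lV\cdot\rV^2$ together with convexity of $C$ (parallelogram law). For the forward implication of i), if $z=P_C(x)$, I would fix $y\in C$, use convexity of $C$ to note $z+t(y-z)\in C$ for $t\in[0,1]$, and exploit that the smooth function $t\mapsto\lV x-z-t(y-z)\rV^2$ attains its minimum over $[0,1]$ at $t=0$; its one-sided derivative at $0$ equals $-2\langle x-z,y-z\rangle\ge0$, giving the inequality. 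For the converse, assuming the inequality for all $y\in C$, I would expand $\lV x-y\rV^2=\lV x-z\rV^2-2\langle x-z,y-z\rangle+\lV z-y\rV^2\ge\lV x-z\rV^2$, so $z$ minimizes and $z=P_C(x)$.

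With i) in hand, the next three parts are short. For ii) I would verify the equivalent form \eqref{e3}, namely $\lV P_C(x)-P_C(y)\rV^2\le\langle P_C(x)-P_C(y),x-y\rangle$: writing $u=P_C(x)$, $w=P_C(y)$, apply i) at $x$ with test point $w\in C$ and at $y$ with test point $u\in C$, then add the two resulting inequalities $\langle x-u,w-u\rangle\le0$ and $\langle y-w,u-w\rangle\le0$; after rearranging this is exactly $\langle u-w,x-y\rangle-\lV u-w\rV^2\ge0$. Part iii) is immediate: $x\in C$ forces the minimal distance to be $0$, so $P_C(x)=x$, while $P_C(x)=x$ puts $x$ in the range $C$. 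For iv), writing $z=P_C(x)$ and $x_\al=z+\al(x-z)$, I would check the characterization of i) for $x_\al$: for every $y\in C$, $\langle x_\al-z,y-z\rangle=\al\langle x-z,y-z\rangle\le0$ since $\al\ge0$ and $\langle x-z,y-z\rangle\le0$ by i); as $z\in C$, this identifies $z=P_C(x_\al)=P_C(x)$.

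The real work is part v), and the main obstacle is upgrading a formal gradient formula to genuine continuous differentiability. My plan has three steps. First, $h$ is convex: since $\lV x-y\rV^2$ is jointly convex in $(x,y)$ and $C$ is convex, the partial infimum $h(x)=\inf_{y\in C}\lV x-y\rV^2$ is convex. Second, I claim $g(x):=2(x-P_C(x))$ is a subgradient of $h$ at every $x$, i.e. $h(x')\ge h(x)+\langle g(x),x'-x\rangle$. Setting $z=P_C(x)$, $z'=P_C(x')$, $u=x-z$, $u'=x'-z'$, $w=z'-z$, this inequality is equivalent, after expanding, to $\lV u'-u\rV^2\ge2\langle u,w\rangle$; the right-hand side is $\le0$ precisely by i) applied at $x$ with test point $z'\in C$, so it holds. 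Third, $g$ is continuous because $P_C$ is nonexpansive (part ii)), hence continuous. Finally, a convex function that admits a continuous subgradient selection has single-valued subdifferential equal to that selection: pairing the subgradient inequalities at $x$ and at $x+td$, dividing by $t>0$ and letting $t\to0^+$ forces any $v\in\partial h(x)$ to satisfy $\langle g(x)-v,d\rangle\ge0$ for all $d$, whence $v=g(x)$. Thus $h$ is differentiable with $\nabla h=g$, and continuity of $g$ makes $h$ continuously differentiable, as claimed.
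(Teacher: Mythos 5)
Your proof is correct in all five parts. There is nothing to compare against here: the paper dismisses this proposition with the single word ``Elementary'' and gives no argument, so any complete proof is already more than the source provides. Your treatment of i)--iv) is the standard textbook route (one-sided derivative of $t\mapsto\lV x-z-t(y-z)\rV^2$ for the forward direction of i), monotonicity of the two variational inequalities for ii), and direct verification of the characterization for iii) and iv)), and the algebra checks out --- in particular, adding $\langle x-u,w-u\rangle\le 0$ and $\langle y-w,u-w\rangle\le 0$ does yield exactly \eqref{e3}. Part v) is the only item with genuine content, and your argument is sound: the identity $h(x')-h(x)-\langle 2(x-P_C(x)),x'-x\rangle=\lV u'-u\rV^2-2\langle u,w\rangle$ together with $\langle u,w\rangle=\langle x-P_C(x),P_C(x')-P_C(x)\rangle\le 0$ from i) establishes the subgradient property, and the lemma that a convex function admitting a continuous subgradient selection is continuously differentiable with that selection as its gradient is a correct and standard piece of convex analysis (single-valuedness of the subdifferential implies differentiability). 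A slightly more elementary alternative would be to sandwich the difference quotient directly using the subgradient inequality at both $x$ and $x'$, which gives $0\le h(x')-h(x)-\langle g(x),x'-x\rangle\le\langle g(x')-g(x),x'-x\rangle\le\lV g(x')-g(x)\rV\,\lV x'-x\rV=o(\lV x'-x\rV)$ by continuity of $g$, avoiding any appeal to subdifferential theory; but your version buys the same conclusion with no more effort.
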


\begin{proof} 
Elementary.
\end{proof}

It is worthwhile to comment at this point that the composition of two firmly nonexpansive operators 
may fail to be firmly nonexpansive: consider $A=\{(x_1,x_2)\in\re^2: x_2=0\}$, $B=\{(x_1,x_2)\in\re^2: x_2=x_1\}$.
$P_A$ and $P_B$ are firmly nonexpansive by Proposition \ref{p2}(ii), but its composition $P_A\circ P_B$ 
fails to satisfy
\eqref{e3} with $x=(0,0)$ and $y=(2,-1)$.

We present next some properties of the set of fixed points of combinations of orthogonal projections.
They have been proved, e.g., in \cite{DeI}, \cite{IuD}, but we include the proofs for the sake of completeness. From now on, for $C\subset\re^n$ and $x\in\re^n$, dist$(x,C)$ will denote the Euclidean distance between $x$ and $C$.

\begin{proposition}\label{p3} 
Consider closed and convex sets $C_1\dots, C_m\subset\re^n$ and nonnegative scalars $\al_1,\dots ,\al_m$ such that
$\sum_{i=1}^m\al_i=1$. Denote $P_i=P_{C_i}$ and let $\overline P=\sum_{i=1}^m\al_iP_i$. Define $g:\re^n\to\re$ as
$g(x)=\sum_{i=1}^m\al_i\lV x-P_i(x)\rV^2=\sum_{i=1}^m\al_i{\rm dist}(x,C_i)^2$ 
and let $C=\cap_{i=1}^mC_i$. Then,
\begin{itemize} 
\item[i)] $F(\overline P) =\{x\in\re^n:\nabla g(x)=0\}$, i.e., since $g$ is convex, 
the set of fixed points of $\overline P$ (if nonempty) is precisely the set of
minimizers of $g$.
\item[ii)] If $C\ne\emptyset$, then $F(\overline P)=C$.
\end{itemize}
\end{proposition}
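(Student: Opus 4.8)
The plan is to reduce everything to the function $g$ and its gradient, exploiting that $g$ is a nonnegative combination of the maps $h_i(x)=\lV x-P_i(x)\rV^2=\dist(x,C_i)^2$, each of which is already fully understood through Proposition \ref{p2}(v).

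For part (i), I would first record that $g$ is continuously differentiable. Indeed, by Proposition \ref{p2}(v) each $h_i$ is $C^1$ with $\nabla h_i(x)=2(x-P_i(x))$, so $g=\sum_{i=1}^m\alpha_i h_i$ is $C^1$ and, using $\sum_{i=1}^m\alpha_i=1$,
$$\nabla g(x)=2\sum_{i=1}^m\alpha_i\bigl(x-P_i(x)\bigr)=2\Bigl(x-\sum_{i=1}^m\alpha_i P_i(x)\Bigr)=2\bigl(x-\overline P(x)\bigr).$$
From this the advertised identity is immediate: $\nabla g(x)=0$ iff $\overline P(x)=x$, that is, iff $x\in F(\overline P)$, so $F(\overline P)=\{x:\nabla g(x)=0\}$. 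This part is purely algebraic and needs no convexity. To justify the parenthetical remark that these are the minimizers, I would observe that each $h_i=\dist(\cdot,C_i)^2$ is convex (being, up to a positive factor, the Moreau envelope of the indicator of the convex set $C_i$, or directly the pointwise minimum over $y\in C_i$ of the jointly convex map $\lV x-y\rV^2$), so $g$ is convex; for a convex $C^1$ function the vanishing of the gradient characterizes the global minimizers, whence $F(\overline P)$ coincides with the set of minimizers of $g$ whenever it is nonempty.

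For part (ii), assuming $C\ne\emptyset$, I would prove the two inclusions separately and bridge them through part (i). The inclusion $C\subseteq F(\overline P)$ is direct: if $x\in C$ then $x\in C_i$ for every $i$, so $P_i(x)=x$ by Proposition \ref{p2}(iii), and therefore $\overline P(x)=\sum_{i=1}^m\alpha_i x=x$. For the reverse inclusion I would use that $g\ge 0$ everywhere while $g\equiv 0$ on the nonempty set $C$, so the minimum value of $g$ is $0$. By part (i), $F(\overline P)$ equals the set of minimizers of $g$, namely $\{x:g(x)=0\}$. Finally, $g(x)=\sum_{i=1}^m\alpha_i\dist(x,C_i)^2=0$ forces $\dist(x,C_i)=0$, hence $x\in C_i$, for each index with $\alpha_i>0$; reading the combination with strictly positive weights (the case of interest) this gives $x\in\cap_{i=1}^m C_i=C$, completing $F(\overline P)\subseteq C$.

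The computations are elementary once the gradient formula is in hand, so I do not expect a genuine obstacle; the one point requiring care is the reverse inclusion in (ii), which is not a direct manipulation of the fixed-point equation $\overline P(x)=x$ but instead routes through part (i) to identify fixed points with zeros of $g$. The implicit hypothesis that the weights be strictly positive must also be kept in mind: if some $\alpha_i=0$, the corresponding $C_i$ disappears from $\overline P$ while remaining in $C$, and the identity $F(\overline P)=C$ can then fail.
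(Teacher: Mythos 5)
Your proof is correct and follows essentially the same route as the paper: part (i) via the gradient identity $\nabla g(x)=2(x-\overline P(x))$ from Proposition \ref{p2}(v), and part (ii) via the easy inclusion $C\subseteq F(\overline P)$ plus the identification of fixed points with the zeros of $g$ through part (i). Your closing observation that strict positivity of the $\al_i$ is implicitly needed for the reverse inclusion in (ii) is accurate (the paper's ``$g(x)>0$ whenever $x\notin C$'' silently assumes it) and is a worthwhile refinement, but it does not change the argument.
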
 

\begin{proof} 
\begin{itemize}
\item[i)] By Proposition \ref{p2}(v), 
$$
\nabla g(x)=2\sum_{i=1}^m\al_i(x-P_i(x))=2\left(x-\sum_{i=1}^m\al_iP_i(x)\right)=2(x-\overline P(x)),
$$
so that $\nabla g(x)=0$ iff $x=\overline P(x)$ iff $x\in F(\overline P)$.
\item[ii)] Clearly, $C\subset F(\overline P)$. For the converse inclusion note that
when $C\ne\emptyset$, we have $g(x)=0$ for all $x\in C$, so that the minimum value of $g$ is indeed $0$,
and the set of minimizers of $g$ coincides with the set of its zeroes, which is $C$, because $g(x)>0$ whenever
$x\notin C$. The result follows then from item (i).
\end{itemize}
\end{proof}

The next result provides a more accurate description of the set $F(\overline P)$ when $m=2$,
i.e., for the case of a convex combination of the orthogonal projections onto two closed and convex sets.

Let $A,B\subset\re^n$ be two closed sets. Take $\al\in (0,1), \overline P=(1-\al)P_A+\al P_B$.
Define $D\subset A\times B$ as $D=\{(x,y)\in A\times D: \lV x-y\rV={\rm dist}(A,B)\}$.  $S_A, S_B$ will 
denote the projections of $D$ onto $A,B$ respectively, i.e., $S_A=\{x\in A: \exists y\in B\,\,{\rm with}\,\,
(x,y)\in D\}$, $S_B=\{y\in B: \exists x\in A \,\, {\rm with} \,\, (x,y)\in D\}$. In other words, $D$ consist of the pairs
in $A\times B$ which realize the distance between $A$ and $B$, $S_A$ is the set of points in $A$ which realize
the distance to $B$, and $S_B$ is the set of points in $B$ which realize the distance to $A$. We remark that $D$ may be empty;
take for instance $A=\{(x_1,x_2)\in\re^2: x_2\le 0\}$, $B=\{(x_1,x_2)\in\re^2: x_2\ge e^{x_1}\}$.

\begin{proposition}\label{p4} 
With the notation above,
\begin{itemize}
\item[i)] For all $(x,y), (x,',y')\in D$, it holds that $x-y=x'-y'$.
\item[ii)] Take $(x,y)\in D, \al\in (0,1)$ and define $w=(1-\al)x+\al y$. Then $P_A(w)=x, P_B(w)=y$.
\item[iii)] $F(\overline P)=\{w=(1-\al)x+\al y: (x,y)\in D\}$.
\end {itemize}
\end{proposition}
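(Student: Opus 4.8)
The plan is to prove the three items in order, since each builds on the previous one, and throughout I treat $A,B$ as closed and convex (which is what makes $P_A,P_B$ the single-valued projections of Proposition \ref{p2}). For item (i), I would fix $(x,y),(x',y')\in D$, set $v=x-y$ and $v'=x'-y'$ so that $\lV v\rV=\lV v'\rV=\dist(A,B)=:d$, and exploit convexity: for $t\in[0,1]$ the pair $\bigl((1-t)x+tx',(1-t)y+ty'\bigr)$ lies in $A\times B$ and has displacement $v+t(v'-v)$. The map $f(t)=\lV v+t(v'-v)\rV^2$ is a convex quadratic with $f(0)=f(1)=d^2$, so convexity gives $f(t)\le d^2$ on $[0,1]$, while $f(t)\ge d^2$ because each such pair lies in $A\times B$ and $d$ is the minimal distance. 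Hence $f\equiv d^2$, which forces its leading coefficient $\lV v'-v\rV^2$ to vanish, i.e. $v=v'$.

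For item (ii), the key observation is that $(x,y)\in D$ already forces $x=P_A(y)$ and $y=P_B(x)$: for any $u\in A$ the pair $(u,y)$ lies in $A\times B$, so $\lV u-y\rV\ge d=\lV x-y\rV$, which says $x$ minimizes the distance from $y$ to $A$, and symmetrically for $y$. Writing $w=(1-\al)x+\al y$ yields the explicit expressions $w-x=\al(y-x)$ and $w-y=(1-\al)(x-y)$. Feeding these into the acute-angle characterization of the projection, Proposition \ref{p2}(i): to get $P_A(w)=x$ I need $\langle w-x,u-x\rangle\le 0$ for all $u\in A$, i.e. $\al\langle y-x,u-x\rangle\le 0$, which is precisely the inequality furnished by $x=P_A(y)$; the argument for $P_B(w)=y$ is entirely symmetric, using $1-\al>0$ and $y=P_B(x)$.

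Item (iii) then splits into two inclusions. The inclusion $\supseteq$ is immediate: if $w=(1-\al)x+\al y$ with $(x,y)\in D$, then item (ii) gives $P_A(w)=x$ and $P_B(w)=y$, whence $\overline P(w)=(1-\al)P_A(w)+\al P_B(w)=(1-\al)x+\al y=w$. For $\subseteq$, take $w\in F(\overline P)$ and put $x=P_A(w)$, $y=P_B(w)$, so that $(1-\al)x+\al y=w$; thus $w$ lies on the segment $[x,y]$, with $y$ on the ray from $x$ through $w$ and $x$ on the ray from $y$ through $w$. Applying the ray-invariance of projections, Proposition \ref{p2}(iv), along these two rays upgrades $x=P_A(w)$, $y=P_B(w)$ to $x=P_A(y)$ and $y=P_B(x)$ (taking parameters $1/\al$ and $1/(1-\al)$ respectively). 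It then remains to promote these two local optimality relations to the global statement $\lV x-y\rV=\dist(A,B)$, so that $(x,y)\in D$.

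I expect this last promotion to be the crux of the whole proposition, since the fixed-point equation only constrains the projections of $w$. The plan there is to set $p=x-y$ and combine the two variational inequalities of Proposition \ref{p2}(i) coming from $x=P_A(y)$ and $y=P_B(x)$, namely $\langle p,u-x\rangle\ge 0$ for all $u\in A$ and $\langle p,u'-y\rangle\le 0$ for all $u'\in B$. Decomposing $u-u'=(u-x)+p+(y-u')$ for an arbitrary pair $(u,u')\in A\times B$ gives $\langle p,u-u'\rangle\ge\lV p\rV^2$, and Cauchy--Schwarz then yields $\lV u-u'\rV\ge\lV p\rV=\lV x-y\rV$. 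Hence $\lV x-y\rV$ realizes the minimal distance, so $(x,y)\in D$ and $w=(1-\al)x+\al y$ has the required form; note that this also handles the degenerate case $D=\emptyset$, in which both sides of the claimed equality are empty.
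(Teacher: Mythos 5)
Your proof is correct, but it follows a genuinely different route from the paper's in items (i) and (iii). For (i), the paper exploits the operator machinery it set up earlier: from $(x,y)\in D$ it gets $P_A(P_B(x))=x$ on $S_A$, squeezes $\lV x-x'\rV=\lV P_B(x)-P_B(x')\rV$ by nonexpansiveness, and then invokes the ``nonexpansive plus'' property of the composition $P_A\circ P_B$ (Proposition \ref{p1}(i) together with Definition \ref{d1}(ii)) to conclude $x-x'=y-y'$. Your convexity argument --- the quadratic $f(t)=\lV v+t(v'-v)\rV^2$ being both $\le d^2$ (convexity, equal endpoint values) and $\ge d^2$ (minimality of $d$) on $[0,1]$, hence constant, hence with vanishing leading coefficient --- is more elementary and self-contained, at the price of not reusing the paper's ``plus'' lemma. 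For (ii) the two arguments are essentially the same strength: the paper uses the ray-invariance Proposition \ref{p2}(iv), you use the variational inequality Proposition \ref{p2}(i); both rest on the observation $x=P_A(y)$, $y=P_B(x)$. For the converse inclusion in (iii), the paper takes a detour through the potential $g$ of Proposition \ref{p3}: fixed points of $\overline P$ minimize $g$, and comparing $g$ at $x$ with $g$ at a point built from an arbitrary pair $(u,v)\in D$ forces $\eta=\delta$; note this comparison presupposes $D\ne\emptyset$, a case the paper does not explicitly address. Your route --- upgrading $x=P_A(w)$, $y=P_B(w)$ to $x=P_A(y)$, $y=P_B(x)$ via Proposition \ref{p2}(iv) along the two rays, then combining the two variational inequalities with the decomposition $u-u'=(u-x)+p+(y-u')$ and Cauchy--Schwarz to get $\lV u-u'\rV\ge\lV p\rV$ for every $(u,u')\in A\times B$ --- is direct, avoids Proposition \ref{p3} altogether, and handles the degenerate case $D=\emptyset$ cleanly (it shows $F(\overline P)=\emptyset$ then). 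Both proofs are valid; the paper's leans on its earlier structural results, yours is leaner and slightly more robust on the empty-$D$ edge case.
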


\begin{proof}
\begin{itemize}
\item[i)] Since, for any $(x,y)\in D$ the pair $(x,y)$ realizes the distance between $A$ and $B$, it follows
that $P_B(x)=y, P_A(y)=x$ for all $(x,y)\in D$, and hence $P_A(P_B(x))=x$ for all $x\in S_A$. So, for all
$(x,y), (x',y') \in D$, we have
\begin{equation}\label{e4}
\lV x-x'\rV=\lV P_A(P_B(x))-P_A(P_B(x'))\rV\le\lV P_B(x)-P_B(x')\rV\le\lV x-x'\rV,
\end{equation} 
using Proposition \ref{p2}(ii). It follows that equality holds throughout \ref{e4}, and since $P_A\circ P_B$
is nonexpansive plus by Proposition \ref{p1}(i), because both $P_A$ and $P_B$ are firmly nonexpansive (and
so nonexpansive plus) by 
Proposition \ref{p2}(ii), we conclude from Definition \ref{d1}(ii) that
$x-x'=P_B(x)-P_B(x')=y-y'$ which implies that $x-y=x'-y'$.
\item[ii)] Take $(x,y)\in D$, so that $x\in A$. Then $w=y+(1-\al)(x-y)=P_B(x)+(1-\al)\left(x-P_B(x)\right)$.
Since $1-\al>0$, it follows from Proposition \ref{p3}(iv) that $P_B(w)=y$. A similar argument 
establishes that $P_A(w)=x$.
\item[iii)] Take $w=(1-\al)x+\al y$ with $(x,y)\in D$. Then, by (ii),
$w=(1-\al)P_A(w)+\al P_B(w)=\overline P(w)$, and hence $w\in F(\overline P)$, so that 
 $\{w=(1-\al)x+\al y: (x,y)\in D\}\subset F(\overline P)$. For the converse inclusion, consider any $x\in F(\overline P)$,
i.e., 
\begin{equation}\label{e5}
x=(1-\al)P_A(x)+\al P_B(x).
\end{equation}
Let $\delta=$ dist$(A,B), \eta=\lV P_A(x)-P_B(x)\rV$. It suffices to check that 
$\left(P_A(x),P_B(x)\right)\in D$, i.e., that 
\begin{equation}\label{e6}
\eta=\delta.
\end{equation} 
From \eqref{e5}, we get
$$
\lV x-P_A(x)\rV=\al\lV P_B(x)-P_A(x)\rV=\al\eta,
$$
$$
\lV x-P_B(x)\rV=(1-\al)\lV P_B(x)-P_A(x)\rV=(1-\al)\eta,
$$
implying that 
\begin{equation}\label{e7}
g(x)=(1-\al)\lV x-P_A(x)\rV^2+\al\lV x-P_B(x)\rV^2=[(1-\al)\al^2+\al(1-\al)^2]\eta^2=(1-\al)\al\eta^2.
\end{equation}
Take now any pair $(u,v)\in D$, so that $\lV u-v\rV=\delta$, and let $w=(1-\al)u+\al v$. 
By item(ii), $u=P_A(w), v=P_B(w)$, so that
$$
\lV w-P_A(w)\rV=\al\lV P_B(w)-P_A(w)\rV=\al\lV u-v\rV=\al\delta,
$$
$$
\lV w-P_B(w)\rV=(1-\al)\lV P_B(w)-P_A(w)\rV=(1-\al)\lV u-v\rV=(1-\al)\al\delta,
$$
and hence,
\begin{equation}\label{e8}
g(w)=(1-\al)\lV w-P_A(w)\rV^2+\al\lV w-P_B(w)\rV^2=[(1-\al)\al^2+\al(1-\al)^2]\delta^2=(1-\al)\al\delta^2.
\end{equation}
By Proposition \ref{p3}(i), $x$ is a minimizer of $g$, so that $g(x)\le g(w)$, which implies, in view
of \eqref{e7},\eqref{e8}, and the fact that $\al\in (0,1)$, that $\eta\le\delta$. On the other hand,
$\eta=\lV P_A(x)-P_B(x)\rV$ with $P_A(x)\in A, P_B(x)\in B$, so that $\eta\ge$ dist$(A,B)=\delta$. We conclude that
\eqref{e6} holds, and the result is established.
\end{itemize}
\end{proof}

We deal now with the main result of this section, which we describe next. The prototypical examples of 
firmly nonexpansive operators are the orthogonal projections onto closed and convex sets. Proposition \ref{p1}(ii)
provides a larger class of firmly nonexpansive operators, namely convex combinations of orthogonal projections.
It is therefore relevant to check that the second class is indeed larger, i.e., that, generically, 
convex combinations of orthogonal projections are not orthogonal projections themselves. We will prove that
this is indeed the case when the intersection of the convex sets is nonempty. However, when this intersection
is empty, a convex combination of orthogonal projections may be itself an orthogonal projection. 
We will establish a necessary and sufficient 
condition for this situation to occur, for the case of two convex sets.  

\begin{proposition}\label{p5}
Consider closed and convex sets $C_1\dots, C_m\subset\re^n$ and nonnegative scalars $\al_1,\dots ,\al_m$ such that
$\sum_{i=1}^m\al_i=1$. Denote $C=\cap_{i=1}^m C_i$, $P_i=P_{C_i}$ and let $\overline P=\sum_{i=1}^m\al_iP_i$.
Assume that $C\ne\emptyset$. If there exists $E\subset\re^n$ such that $\overline P=P_E$ then 
$E=C_1= \dots = C_m$. 
\end{proposition}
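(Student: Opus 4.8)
The plan is to first identify the set $E$ and then show that the individual sets $C_i$ all collapse onto the intersection $C$. Since $C\ne\emptyset$, Proposition \ref{p3}(ii) yields $F(\overline P)=C$, while Proposition \ref{p2}(iii) gives $F(P_E)=E$. As $\overline P=P_E$, these two fixed-point sets must coincide, so $E=C$; this already disposes of the claim $E=C$ and reduces everything to proving $C_1=\dots=C_m=C$. Moreover, again by Proposition \ref{p2}(iii), $F(P_i)=C_i$ and $F(P_C)=C$, so it suffices to establish the stronger operator identity $P_i=P_C$ for every $i$. Throughout I will assume each $\al_i>0$, which is the substantive case: if some weight vanishes, the corresponding set simply disappears from $\overline P$ and cannot be pinned down, so positivity of the weights is genuinely needed for the stated conclusion.

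The heart of the argument is a two-sided (sandwich) estimate. Fix an arbitrary $x\in\re^n$ and put $p=P_C(x)$. Since $\overline P=P_E=P_C$, we have $p=\sum_{i=1}^m\al_iP_i(x)$, and hence $x-p=\sum_{i=1}^m\al_i(x-P_i(x))$. The key observation is that $p\in C\subset C_i$, so $p$ is an admissible competitor in the nearest-point problem defining $P_i(x)$; therefore $\lV x-P_i(x)\rV\le\lV x-p\rV$ for every $i$. Feeding this into the triangle inequality gives
\begin{equation*}
\lV x-p\rV=\lV \sum_{i=1}^m\al_i(x-P_i(x))\rV\le\sum_{i=1}^m\al_i\lV x-P_i(x)\rV\le\sum_{i=1}^m\al_i\lV x-p\rV=\lV x-p\rV,
\end{equation*}
so equality holds at every step.

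From equality in the right-hand inequality, together with $\al_i>0$ and $\lV x-P_i(x)\rV\le\lV x-p\rV$, I would conclude $\lV x-P_i(x)\rV=\lV x-p\rV$ for each $i$. Thus the point $p\in C_i$ attains $\dist(x,C_i)$, and by uniqueness of the nearest point in a closed convex set we get $P_i(x)=p=P_C(x)$. Since $x$ was arbitrary, $P_i=P_C$, whence $C_i=F(P_i)=F(P_C)=C$ for every $i$, which finishes the proof. I expect the main difficulty to be conceptual rather than computational: the decisive idea is to reintroduce the common projected point $p=P_C(x)$, which lies in each $C_i$, as a test point in every individual projection, because it is exactly this membership $p\in C_i$ that produces the upper bound closing the sandwich. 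Once that observation is in place, the equality case of the triangle inequality does all the remaining work, and the only hypothesis one must keep watching is the positivity of the weights.
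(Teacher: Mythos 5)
Your proof is correct, and it rests on the same two pillars as the paper's argument --- first identifying $E$ with $C$ via $C=F(\overline P)=F(P_E)=E$ (Propositions \ref{p3}(ii) and \ref{p2}(iii)), then closing a triangle-inequality sandwich on the decomposition $x-\overline P(x)=\sum_i\al_i(x-P_i(x))$ using the fact that $\overline P(x)$ lands in $C\subset C_i$ --- but the execution differs in a way worth noting. The paper restricts attention to points $x\in C_i$, introduces the index $\ell$ maximizing $\lV x-P_j(x)\rV$, and squeezes out $\al_i\lV x-P_\ell(x)\rV\le 0$, concluding that $x\in C_j$ for all $j$ and hence $C_i\subset C_j$. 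You instead take an arbitrary $x\in\re^n$, use $p=P_C(x)$ (which lies in every $C_i$) as the competitor in each nearest-point problem, and extract from the equality case of the triangle inequality that $P_i(x)=p$ for every $i$; this yields the stronger operator identity $P_i=P_C$ in one stroke and avoids the argmax device, so your version is arguably cleaner and slightly more informative. You are also right to insist on positivity of the weights: with some $\al_i=0$ the statement as literally written fails (attach weight zero to a set $C_i\supsetneq C$ and $\overline P$ is still an orthogonal projection, yet $C_i\ne C$), and the paper's own proof tacitly uses $\al_i>0$ at the step $\al_i\lV x-P_\ell(x)\rV\le 0\Rightarrow\lV x-P_\ell(x)\rV=0$, so your explicit hypothesis repairs a small oversight rather than weakening the result.
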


\begin{proof}
By Propositions \ref{p3}(ii) and \ref{p2}(iii),
\begin{equation}\label{e9}
C=F(\overline P)=F(P_E)=E.
\end{equation}
Take $x\in C_i$. Let $\ell={\rm argmax}_{1\le j\le m}\{\lV x-P_j(x)\rV\}$, $w=\sum_{j=1}^m\al_jP_j(x)=\overline P(x)=P_E(x)$,
so that $w\in $ Im$(P_E)=E=C$, using \eqref{e9}, and hence $w\in C_\ell$. It follows that
$$
\lV x-P_\ell(x)\rV\le\lV x-w\rV=\lV\sum_{i=j}^m\al_j(x-P_j(x))\rV\le\sum_{j=1}^m\al_j\lV x-P_j(x)\rV
$$
$$
=\sum_{j=1, j\ne i}^m\al_j\lV x-P_j(x)\rV\le
\sum_{j=1, j\ne i}^m\al_j\lV x-P_\ell(x)\rV=
$$
\begin{equation}\label{e10}
\left(\sum_{j=1, j\ne i}^m\al_j\right)\lV x-P_\ell(x)\rV=(1-\al_i)\lV x-P_\ell(x)\rV,
\end{equation}
using the convexity of $\lV\cdot\rV$ in the first inequality, the fact that $x\in C_i$ in the second 
equality and the definition of $\ell$ in the second inequality.
It follows from \eqref{e10} that $\al_i\lV x-P_\ell(x)\rV\le 0$, so that $\lV x-P_\ell(x)\rV =0$.
Since $0\le\lV x-P_j(x)\rV\le\lV x-P_\ell(x)\rV$ for all $j$ by definition of $\ell$, we conclude that
$\lV x-P_j(x)\rV=0$ for all $j$, i.e. $x\in C_j$. Since $x$ is an arbitrary point in $C_i$, we get that
that $C_i\subset C_j$ for all $i,j$, i.e., $C_1= \dots =C_m$, and the result follows immediately from \eqref{e9}.
\end{proof}   

Next, we fully characterize the situation for the case of $2$ convex sets. For $A\subset\re^n$, we denote the affine
hull of $A$ as aff$(A)$.

\begin{proposition}\label{p6}
Take closed and convex sets $A,B\subset\re^n$ and $\al\in (0,1)$. Define $\overline P=(1-\al)P_A+\al P_B$.
Then, there exists a nonempty, closed and convex set $E\subset\re^n$
such that $\overline P=P_E$ if and only if there exists $c\in {\rm aff}(A)^\perp$ such that $B=A+c$.
\end{proposition}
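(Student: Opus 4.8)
The plan is to prove the two implications separately, with the converse direction ($\Rightarrow$) carrying essentially all of the difficulty. For the easy direction ($\Leftarrow$), assume $B=A+c$ with $c\in{\rm aff}(A)^\perp$. First I would record that projection onto $A$ is insensitive to displacements orthogonal to ${\rm aff}(A)$: for fixed $x$ one has $\lV(x-c)-u\rV^2=\lV x-u\rV^2-2\langle c,x-u\rangle+\lV c\rV^2$, and since $c\perp{\rm aff}(A)$ makes $\langle c,u\rangle$ constant over $u\in A$, the two squared distances agree up to an $u$-independent constant and hence share the same minimizer; thus $P_A(x-c)=P_A(x)$. Combined with the elementary translation rule $P_{A+c}(x)=P_A(x-c)+c$ this gives $P_B(x)=P_A(x)+c$ for every $x$, so $\overline P(x)=P_A(x)+\al c$. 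Applying the same orthogonality argument to the perpendicular vector $\al c$ yields $P_{A+\al c}(x)=P_A(x)+\al c=\overline P(x)$, so $\overline P=P_E$ with $E:=A+\al c$, a nonempty, closed, convex set.

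For the converse, assume $\overline P=P_E$. Since $E\neq\emptyset$, Proposition \ref{p2}(iii) gives $F(\overline P)=E\neq\emptyset$, so Proposition \ref{p4}(iii) forces $D\neq\emptyset$; thus $\delta:={\rm dist}(A,B)$ is attained and Proposition \ref{p4}(i) supplies a common difference for all pairs in $D$. The crux — and the main obstacle — is to upgrade this to the assertion that $P_B(x)-P_A(x)$ is one fixed vector for \emph{every} $x\in\re^n$, not merely for $x$ arising from $D$. The key observation is the purely algebraic identity
\[
(1-\al)\lV x-P_A(x)\rV^2+\al\lV x-P_B(x)\rV^2=\lV x-\overline P(x)\rV^2+\al(1-\al)\lV P_A(x)-P_B(x)\rV^2,
\]
valid for all $x$ with no hypothesis (expand $\overline P(x)=(1-\al)P_A(x)+\al P_B(x)$). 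Using $\overline P=P_E$, the left-hand side is $g(x):=(1-\al)\dist(x,A)^2+\al\dist(x,B)^2$ and the first right-hand term is $\dist(x,E)^2$. By Proposition \ref{p3}(i) and Proposition \ref{p2}(v), $\nabla g(x)=2(x-\overline P(x))=2(x-P_E(x))$ equals the gradient of $x\mapsto\dist(x,E)^2$, so $g-\dist(\cdot,E)^2$ is constant on $\re^n$. Substituting back forces $\lV P_B(x)-P_A(x)\rV$ to be constant in $x$; evaluating at any $w\in E$ and invoking Proposition \ref{p4}(ii) identifies that constant as $\delta$.

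It follows that $(P_A(x),P_B(x))\in D$ for every $x$, whence Proposition \ref{p4}(i) yields a fixed vector $c$ with $P_B(x)=P_A(x)+c$ on all of $\re^n$. Taking images and using that $P_A,P_B$ map onto $A,B$ respectively gives $B={\rm Im}(P_B)={\rm Im}(P_A)+c=A+c$. Finally, to obtain $c\in{\rm aff}(A)^\perp$, I would apply the optimality condition of Proposition \ref{p2}(i) for $B$ at an arbitrary $a\in A$, where $P_B(a)=a+c$: writing a generic point of $B$ as $u+c$ with $u\in A$, the condition reads $\langle c,u-a\rangle\ge 0$ for all $u\in A$; symmetrizing the roles of $a$ and $u$ (both in $A$) forces $\langle c,u-a\rangle=0$, i.e.\ $c$ is orthogonal to every difference of points of $A$, so $c\in{\rm aff}(A)^\perp$, completing the proof.
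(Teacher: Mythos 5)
Your proof is correct, and while the easy direction ($B=A+c\Rightarrow\overline P=P_{A+\al c}$) is essentially the paper's argument in different packaging (you use invariance of $P_A$ under orthogonal displacement of the argument plus the translation rule, the paper verifies the variational inequality of Proposition \ref{p2}(i) directly), your converse is genuinely different from the paper's. The paper proceeds geometrically: for arbitrary $x\in A$ it sets $z=\overline P(x)$, uses Proposition \ref{p2}(iv) to get $P_B(z)=P_B(x)$, exploits $z\in E=F(\overline P)$ together with Proposition \ref{p4}(ii)--(iii) to conclude $\dist(x,B)=\dist(A,B)$, hence $A=S_A$ and $B=S_B$, and then gets $B=A+c$ from $S_B=S_A+c$; orthogonality of $c$ is obtained via a relative-interior argument. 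You instead use the exact convexity identity $(1-\al)\lV x-P_A(x)\rV^2+\al\lV x-P_B(x)\rV^2=\lV x-\overline P(x)\rV^2+\al(1-\al)\lV P_A(x)-P_B(x)\rV^2$ together with the gradient computation of Propositions \ref{p3}(i) and \ref{p2}(v) to show $g-\dist(\cdot,E)^2$ is constant, which forces $\lV P_A(x)-P_B(x)\rV$ to be the constant $\delta=\dist(A,B)$ for \emph{all} $x\in\re^n$ (to pin the constant down you implicitly need Proposition \ref{p4}(iii), not just (ii), to write a point of $E$ as $(1-\al)u+\al v$ with $(u,v)\in D$ --- a citation quibble, not a gap). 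This buys you the stronger pointwise conclusion $P_B\equiv P_A+c$ on all of $\re^n$ in one stroke, plus the pleasant by-product $\dist(x,E)^2=g(x)-\al(1-\al)\delta^2$; and your symmetrization of the obtuse-angle inequality over pairs of points of $A$ yields $c\in{\rm aff}(A)^\perp$ more cleanly than the paper's relative-interior argument. The trade-off is reliance on differentiability of the squared distance, where the paper stays purely within elementary projection geometry.
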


\begin{proof}
We start with the ``only if" statement. We claim that the result holds with $E=A+\al c$. First we prove that
$P_B(x)=P_A(x)+c$ for all $x\in\re^n$. Let $z=P_A(x)+c$. By Proposition \ref{p2}(i), it suffices to prove
that $\langle x-z, y-z\rangle\le 0$ for all $x\in\re^n$ and all $y\in B=A+c$, i.e., that for all $y\in A$
we have
$$
0\ge\langle x-z,y+c-z\rangle=\langle x-P_A(x)-c, y+c-P_A(x)-c\rangle
$$
\begin{equation}\label{e11}
=\langle x-P_A(x), y-P_A(x)\rangle -
\langle c,y-P_A(x)\rangle=\langle x-P_A(x),y-P_A(x)\rangle,
\end{equation}
using in the last equality the facts that $c\in$ aff$(A)^\perp$ and $y,P_A(x)\in A$, so that $y-P(A)\in$ aff$(A)$,
and hence $\langle c,y-P_A(x)\rangle =0$. Note that  $0\le\langle x-P_A(x),y-P_A(x)\rangle$ by Proposition \ref{p2}(i),
so that the inequality in \eqref{e11} holds, and hence we have proved that $P_B(x)=P_A(x)+c$ for all $x\in\re^n$.
It follows that $\overline P=(1-\al)P_A+\al P_B=(1-\al)P_A+\al P_A+\al c=P_A+\al c$.

Now, the same argument used to prove that $P_{A+c}=P_A+c$, allow us to conclude that $P_A+\al c=P_{A+\al c}$,
so that $(1-\al)P_A+\al P_B=P_E$ with $E=A+\al c$.

Now we prove the ``if" statement. First we must identify the appropriate vector $c$. By assumption, $\overline P=P_E$,
so that $F(\overline P)=E\ne\emptyset$ by Proposition \ref{p2}(iii). It follows that $D$, as defined in Proposition \ref{p4},
is nonempty. We take any pair $(u,v)\in D$ and take $c=v-u$. By Proposition \ref{p4}(i), $c$ does not depend on the
chosen pair $(u,v)$. We must prove that $B=A+c$, and we first claim that 
\begin{equation}\label{e12}
S_B=S_A+c, 
\end{equation}
with $S_A, S_B$ 
as in Proposition \ref{p4}. Take $u\in S_B$, so that there exists $v\in S_A$ such that $(u,v)\in D$ and hence
$v=u+(v-u)=x+c$, showing that $v\in S_A+c$, and therefore $S_B\subset S_A+c$. Reversing the roles of $A,B$ we get the
reverse inclusion, and then \eqref{e12} holds. 

We show next that the assumption $\overline P=P_E$ implies that
$A=S_A, B=S_B$. Take any $x\in A$. We must prove that $x$ realizes the distance to $B$. Let 
$z=\overline P(x)=(1-\al)P_A(x)+\al P_B(x)=(1-\al)x+\al P_B(x)$. It follows from Proposition \ref{p2}(iv) that
$P_B(z)=P_B(x)$. Note that
\begin{equation}\label{e13}
(1-\al)(x-P_B(x))=z-P_B(x)=z-P_B(z). 
\end{equation}
Now $z=\overline P(x)=P_E(x)$, so that
$z\in E=F(P_E)=F(\overline P)$. By Proposition \ref{p4}(ii) and (iii), $z=(1-\al)P_A(z)+\al P_B(z)$, with
$(P_A(z),P_B(z))\in D$. It follows that 
\begin{equation}\label{e14}
z-P_B(z)=(1-\al)(P_A(z)-P_B(z)).
\end{equation}
Since $\al \in (0,1)$, we conclude from \eqref{e13}, \eqref{e14} that $x-P_B(x)=P_A(z)-P_B(z)$,
so that, in view of the fact that $(P_A(z),P_B(z))\in D$, 
$$
{\rm dist}(x,B)=\lV x-P_B(x)\rV=\lV P_A(z)-P_B(z)\rV={\rm dist}(A,B).
$$
We have proved that $x$ realizes the distance between $A$ and $B$, i.e., that $x\in S_A$. 
Since $x$ is an arbitrary point in $A$, we have $A\subset S_A\subset A$, so that $A=S_A$.      
By the same token, $B=S_B$. In view of \eqref{e12}, we have that $B=A+c$. 

It only remains
to be verified that $c\in$ aff$(A)^\perp$. Let relint$(A)$ be the relative interior of $A$
(i.e., the interior of $A$ with respect to aff$(A)$). Take any $x\in$ relint$(A)$ and any $z\in$ aff$(A)$.
Since $x\in$ relint$(A)$, there exists $\varepsilon >0$ such that both $x+\varepsilon(z-x)$ 
and $x-\varepsilon(z-x)$ belong to $A$. Since $x\in A=S_A$, we obtain from Proposition \ref{p4}(ii) that
$x=P_A(v)$ for some $v\in S_B$, and $c=v-P_B(v)=v-x$, so that, by Proposition \ref{p2}(i),
$\langle c,y-x\rangle=\langle v-P_B(v),y-P_B(v)\rangle\le 0$ for all $y\in A$. Taking first
$y=x+\varepsilon (z-x)$ and then $y=x-\varepsilon(z-x)$, we conclude that $\varepsilon\langle c,z-x\rangle\le 0$,
$-\varepsilon\langle c,z-x\rangle\le 0$, implying that $\langle c,z-y\rangle =0$ for all $z\in$ aff$(A)$,
and hence $c\in$ aff($A)^\perp$, completing the proof.
\end{proof}

\begin{Cor}\label{c1} 
Assume that any of the equivalent statements in Proposition \ref{p6} hold
and that $A\ne B$. Then $A$ has empty interior and $A\cap B=\emptyset$.
\end{Cor}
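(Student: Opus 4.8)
The plan is to invoke the second of the two equivalent conditions in Proposition \ref{p6}: the hypothesis provides a vector $c\in{\rm aff}(A)^\perp$ with $B=A+c$. The whole argument rests on a single preliminary remark, namely that the assumption $A\ne B$ forces $c\ne 0$; indeed, $c=0$ would immediately give $B=A+c=A$. Both conclusions then follow by playing the relation $c\in{\rm aff}(A)^\perp$ against this nonvanishing.

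For the empty-interior assertion I would proceed by contradiction. If $A$ had nonempty interior it would contain an open ball of $\re^n$, whose affine hull is all of $\re^n$; hence ${\rm aff}(A)=\re^n$ and therefore ${\rm aff}(A)^\perp=\{0\}$. Since $c\in{\rm aff}(A)^\perp$, this yields $c=0$, contradicting the preliminary remark. Thus $A$ has empty interior.

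For the disjointness assertion $A\cap B=\emptyset$ I would again argue by contradiction. Suppose some $x$ lies in $A\cap B$. Then $x\in A$ and, because $x\in B=A+c$, also $x-c\in A$. Consequently $c=x-(x-c)$ is a difference of two points of $A$, i.e., a vector parallel to ${\rm aff}(A)$. But $c\in{\rm aff}(A)^\perp$ means $c$ is orthogonal to every such difference, in particular to itself, so $\lV c\rV^2=\langle c,c\rangle=0$ and hence $c=0$, again contradicting $c\ne 0$. Therefore $A\cap B=\emptyset$, which completes the proof.

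The argument is essentially a one-line exploitation of the membership $c\in{\rm aff}(A)^\perp$, so I do not anticipate any genuine obstacle; the only point requiring minor care is the interpretation of ${\rm aff}(A)^\perp$ as the set of vectors orthogonal to every difference of elements of $A$ (the convention used implicitly in the proof of Proposition \ref{p6}, e.g. in deducing $\langle c,y-P_A(x)\rangle=0$), which is exactly what legitimizes the self-orthogonality step $\langle c,c\rangle=0$ above.
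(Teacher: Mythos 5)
Your proof is correct and follows essentially the same route as the paper's: deduce $c\ne 0$ from $A\ne B$, get empty interior from ${\rm aff}(A)^\perp\ne\{0\}$ forcing ${\rm aff}(A)\ne\re^n$, and derive $\lV c\rV^2=\langle c,x-x'\rangle=0$ from a hypothetical point of $A\cap B$. The remark about interpreting ${\rm aff}(A)^\perp$ as the set of vectors orthogonal to differences of points of $A$ is a sensible clarification but does not change the argument.
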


\begin{proof}
Since $B=A+c$ and $A\ne B$, we have $c\ne 0$. Since $c\in$ aff$(A)^\perp$, we obtain that aff$(A)\ne\re^n$,
i.e. aff$(A)$ is not full dimensional and hence $A$ has empty interior. 

For the second statement, assume that $A\cap B\ne\emptyset$ and take
$x\in A\cap B$. Since $B=A+c$, we have $x=x'+c$ with $x'\in A$, so that $\lV c\rV^2=\langle c,x-x'\rangle =0$, because
$c\in $aff$(A)^\perp$ and $x,x'\in A$, so that $x-x'\in$ aff$(A)$. It follows that $c=0$, and the 
resulting contradiction entails the result.
\end{proof}

We mention that the second statement of the corollary follows also from Proposition \ref{p5}.

The ``only if" statement of Proposition \ref{p6} can be easily generalized to the case of $m$ convex sets;
unfortunately we do not have at this point a proof for the much more interesting generalization of the ``if" statement.
The following corollary contains the generalization of the ``only if" statement.

\begin{Cor}\label{c2}
Consider closed and convex sets $C_1\dots, C_m\subset\re^n$ and nonnegative scalars $\al_1,\dots ,\al_m$ such that
$\sum_{i=1}^m\al_i=1$. Denote $P_i=P_{C_i}$ and let $\overline P=\sum_{i=1}^m\al_iP_i$. Take 
$\be_2, \dots ,\be_m\in\re, c\in$ aff$(C_1)^\perp$, and assume that $C_i=C_1+\beta_i c$ for $i=2,\cdots ,m$.
Define $\bar\be=\sum_{i=2}^m\al_i\be_i, E= C_1+\bar\be c$. Then $\overline P=P_E$.
\end{Cor}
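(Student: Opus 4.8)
The plan is to reduce the $m$-set statement to the translation formula for projections that already underlies the ``only if'' direction of Proposition \ref{p6}. The key observation is that, upon setting $\be_1=0$, the hypothesis reads $C_i=C_1+\be_i c$ for all $i\in\{1,\dots,m\}$, with $c\in{\rm aff}(C_1)^\perp$. Hence it suffices to establish the pointwise identity
$$
P_i(x)=P_1(x)+\be_i c\quad\text{for all } x\in\re^n \text{ and all } i,
$$
and then to combine these identities linearly with the weights $\al_i$.

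First I would prove the translation formula $P_i(x)=P_1(x)+\be_i c$. Fixing $x$ and $i$, set $z=P_1(x)+\be_i c$; by Proposition \ref{p2}(i) it is enough to verify that $\langle x-z,y-z\rangle\le 0$ for every $y\in C_i$. Writing $y=y'+\be_i c$ with $y'\in C_1$, the $\be_i c$ terms cancel in the second slot, leaving $\langle x-P_1(x)-\be_i c,\,y'-P_1(x)\rangle$; since $y'-P_1(x)$ is a difference of two points of $C_1$ it lies in the direction space of ${\rm aff}(C_1)$, so the term $\langle\be_i c,\,y'-P_1(x)\rangle$ vanishes because $c\in{\rm aff}(C_1)^\perp$. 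What remains is $\langle x-P_1(x),\,y'-P_1(x)\rangle\le 0$, again by Proposition \ref{p2}(i). This is precisely the computation in \eqref{e11}, now carried out for a translation by $\be_i c$ rather than by $c$.

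Next I would assemble the result. Summing the translation formula against the weights $\al_i$ and using $\sum_{i=1}^m\al_i=1$ together with $\be_1=0$ gives
$$
\overline P(x)=\sum_{i=1}^m\al_i\bigl(P_1(x)+\be_i c\bigr)=P_1(x)+\Bigl(\sum_{i=2}^m\al_i\be_i\Bigr)c=P_1(x)+\bar\be\,c.
$$
Finally, applying the translation formula once more, this time to the single set $C_1$ translated by $\bar\be c$, yields $P_1+\bar\be c=P_{C_1+\bar\be c}=P_E$, so that $\overline P=P_E$, as claimed.

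I expect no genuine obstacle here: the whole argument is a linear superposition of the $m=2$ computation from Proposition \ref{p6}, and the only place requiring care is the repeated use of $c\in{\rm aff}(C_1)^\perp$ to annihilate the cross terms $\langle\be_i c,\,y'-P_1(x)\rangle$ and $\langle\bar\be c,\,\cdot\,\rangle$. The reason this corollary captures only the ``only if'' half---and why the much more interesting converse is left open---is that the argument crucially starts from the explicit translational structure $C_i=C_1+\be_i c$; recovering such structure from the bare hypothesis $\overline P=P_E$ for general $m$ has no evident analogue of the distance-realizing set $D$ that drove the converse in Proposition \ref{p6}.
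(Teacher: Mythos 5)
Your proposal is correct and follows essentially the same route as the paper: the paper's proof simply invokes the translation argument from Proposition \ref{p6} to get $P_i(x)=P_1(x)+\be_i c$, sums against the weights to obtain $\overline P=P_1+\bar\be c$, and applies the same argument once more to identify this with $P_{C_1+\bar\be c}=P_E$. You merely spell out the variational-inequality verification that the paper leaves implicit.
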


\begin{proof} The argument used in the proof of Proposition \ref{p6} shows that $P_i(x)=P_1(x)+\beta_i c$ for
$i=2, \dots ,m$, and all $x\in\re^n$, so that $\overline P(x)=P_1(x)+\bar\be c$ for all $x\in\re^n$. 
The same argument then shows that $P_1+\bar\be c=P_E$.
\end{proof}  

\section{Convergence of CRM applied to FPP}\label{s3}

In this section, we establish convergence of CRM applied to finding a point in Fix$(T,P_U)$, where $T:\re^n\to\re^n$ is firmly nonexpansive and
$P_U:\re^n\to\re^n$ is the orthogonal projection onto an affine manifold $U\subset\re^n$. As explained in Section \ref{s1}, through Pierra's formalism
in the product space $\re^{nm}$, this result entails convergence of CRM applied to finding a point in 
Fix$(T_1, \dots ,T_m)$, where $T_i:\re^n\to\re^n$ is firmly nonexpansive for $1\le i\le m$.

Our convergence analysis for CRM requires comparing the CRM and the MAP sequences, so that we start by proving convergence of the
second one, defined as 
\begin{equation}\label{ee14}
z^{k+1}=P_U(T(z^k)),
\end{equation}
starting at some $z^0\in\re^n$. This is a classical result, but we include it for the sake of self-containment.
We start with the following intermediate result.

\begin{proposition}\label{p7}
For all $x\in\re^n$ and all $y\in {\rm Fix}(T,P_U)$ it holds that
\begin{equation}\label{e15}
\lV P_U(T(x))-y\rV^2\le\lV x-y\rV^2-\lV P_U(x)-x\rV^2-\lV P_U(T(x))-T(x)\rV^2.
\end{equation}
\end{proposition}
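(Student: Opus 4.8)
The plan is to derive \eqref{e15} from two ingredients, applied in the order in which the operators act in the MAP step \eqref{ee14}: the firm nonexpansiveness of $T$, and the orthogonality of the projection onto the affine manifold $U$. Fix $y\in{\rm Fix}(T,P_U)$, so that $T(y)=y$, $P_U(y)=y$, and in particular $y\in U$. First I would apply \eqref{e1} to $T$ at the pair $x,y$ (equivalently, the acute-angle property of Proposition \ref{pp1}); using $T(y)=y$ this gives
\begin{equation*}
\lV T(x)-y\rV^2\le\lV x-y\rV^2-\lV T(x)-x\rV^2 .
\end{equation*}

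Next I would use that $P_U$ is the orthogonal projection onto the affine manifold $U=u_0+V$, with $V$ a linear subspace. Since $y\in U$, the vector $P_U(T(x))-y$ lies in $V$, while the residual $T(x)-P_U(T(x))$ is orthogonal to $V$; the Pythagorean identity therefore holds exactly,
\begin{equation*}
\lV T(x)-y\rV^2=\lV P_U(T(x))-y\rV^2+\lV P_U(T(x))-T(x)\rV^2 .
\end{equation*}
Solving for $\lV P_U(T(x))-y\rV^2$ and substituting the bound of the previous paragraph produces
\begin{equation*}
\lV P_U(T(x))-y\rV^2\le\lV x-y\rV^2-\lV T(x)-x\rV^2-\lV P_U(T(x))-T(x)\rV^2 .
\end{equation*}

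The hard part, and the step I expect to demand the most care, is to present the subtracted middle term in the form $\lV P_U(x)-x\rV^2$ required by \eqref{e15}: the firmly nonexpansive step removes the displacement $\lV T(x)-x\rV^2$ of $T$ at $x$, rather than the projection residual $\lV P_U(x)-x\rV^2$. To bridge this I would invoke the companion Pythagorean decomposition, valid because $y\in U$,
\begin{equation*}
\lV x-y\rV^2=\lV P_U(x)-y\rV^2+\lV P_U(x)-x\rV^2 ,
\end{equation*}
which isolates $\lV P_U(x)-x\rV^2$ inside $\lV x-y\rV^2$. Both displacements then live in a single decomposition of $\lV x-y\rV^2$, and the crux of the argument is to route the available slack so that exactly $\lV P_U(x)-x\rV^2$ is subtracted. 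I would therefore keep both decompositions of $\lV x-y\rV^2$ explicit throughout, precisely so that the quadratic terms left over after the firmly nonexpansive and the projection estimates can be matched to the form stated in \eqref{e15}.
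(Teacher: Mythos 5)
Your first three steps are, in substance, the paper's own proof: the paper chains firm nonexpansiveness of $P_U$ (inequality \eqref{aab11}, applied at $T(x)$ with $y\in U$) with firm nonexpansiveness of $T$ via \eqref{e1}, arriving at exactly your intermediate inequality $\lV P_U(T(x))-y\rV^2\le\lV x-y\rV^2-\lV T(x)-x\rV^2-\lV P_U(T(x))-T(x)\rV^2$, which is the paper's display \eqref{etr}. Your use of the exact Pythagorean identity on the affine manifold $U$ is marginally sharper than the paper's firm-nonexpansiveness estimate at that step, but the content is identical.

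The step you defer to the end --- ``routing the slack'' so that $\lV P_U(x)-x\rV^2$ rather than $\lV T(x)-x\rV^2$ is subtracted --- is a genuine gap in the attempt, and it cannot be closed, because \eqref{e15} as printed is false. Take $T=I$, which is firmly nonexpansive, so that ${\rm Fix}(T,P_U)=U$; for $x\notin U$ and $y=P_U(x)$, the left-hand side of \eqref{e15} equals $0$ while the right-hand side equals $\lV x-P_U(x)\rV^2-\lV P_U(x)-x\rV^2-\lV P_U(x)-x\rV^2=-\lV x-P_U(x)\rV^2<0$. Equivalently, the comparison $\lV T(x)-x\rV\ge\lV P_U(x)-x\rV$ that your matching step would need fails in general. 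The middle term in \eqref{e15} should read $\lV T(x)-x\rV^2$: that corrected statement is precisely what you proved, it is what the paper's own proof establishes before the unjustified closing phrase ``which implies the result,'' and it is the only form used downstream --- in \eqref{et17} it is invoked verbatim, and in \eqref{e37} one has $x^k\in U$, so $P_U(x^k)-x^k=0$ and the bound $\lV S(x^k)-x^k\rV\le\lV T(x^k)-x^k\rV$ (nonexpansiveness of $P_U$, since $x^k=P_U(x^k)$) yields the stated Fej\'er inequality. So your instinct that this step ``demands the most care'' was correct, but the resolution is that the proposition's statement contains a typo, not that a cleverer decomposition of $\lV x-y\rV^2$ exists.
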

 
 \begin{proof}
By firm nonexpansiveness of $P_U$, we have 
\begin{equation}\label{aab11}
\lV P_U(x)-y\rV^2\le\lV x-y\rV^2-\lV P_U(x)-x\rV^2
\end{equation} 
for all $x\in\re^n$, using the fact that $u\in U$.
Substituting $T(x)$ for $x$ in \eqref{aab11}, we obtain
\begin{equation}\label{et16}
\lV P_U(T(x))-y\rV^2\le\lV T(x)-y\rV^2-\lV P_U(T(x))-T(x)\rV^2 .
\end{equation}
Since $T$ is firmly nonexpansive, 
\begin{equation}\label{et162}
\lV T(x)-y\rV^2\le\lV x-y\rV^2-\lV T(x)-x\rV^2. 
\end{equation}
Now combining \eqref{et16} with \eqref{et162}, we get 
\begin{equation}\label{etr}
 \lV P_U(T(x))-y\rV^2\le\lV x-y\rV^2-\lV T(x)-x\rV^2-\lV P_U(T(x))-T(x)\rV^2 
\end{equation}
which implies the result.
\end{proof}

Using Proposition \ref{p7} we get convergence of $\{z^k\}$ using the classical argument for MAP applied to CFP,
as we show next:

\begin{proposition}\label{p8} 
If ${\rm Fix}(T,P_U)\ne\emptyset$, then the sequence $\{z_k\}$ defined by \eqref{ee14} converges 
to a point $\bar z\in {\rm Fix}(T,P_U)$.
\end{proposition}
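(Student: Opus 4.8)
The plan is to read \eqref{e15} as a Fej\'er-monotonicity inequality equipped with a pair of summable residual terms, and then run the classical three-step argument for such schemes. First I would fix any $y\in{\rm Fix}(T,P_U)$ (which is nonempty by hypothesis) and instantiate Proposition \ref{p7} at $x=z^k$, obtaining
$$
\lV z^{k+1}-y\rV^2\le\lV z^k-y\rV^2-\lV P_U(z^k)-z^k\rV^2-\lV z^{k+1}-T(z^k)\rV^2 .
$$
In particular $\lV z^{k+1}-y\rV\le\lV z^k-y\rV$, so $\{\lV z^k-y\rV\}$ is nonincreasing and bounded below, hence convergent; consequently $\{z^k\}$ is bounded and Fej\'er monotone with respect to ${\rm Fix}(T,P_U)$.

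Next I would sum the displayed inequality over $k=0,\dots,N$ and telescope the left-hand side, which yields $\sum_{k\ge 0}\big(\lV P_U(z^k)-z^k\rV^2+\lV z^{k+1}-T(z^k)\rV^2\big)\le\lV z^0-y\rV^2<\infty$. Hence both residuals tend to $0$: $\lV P_U(z^k)-z^k\rV\to 0$ and $\lV z^{k+1}-T(z^k)\rV\to 0$. I would also record that the intermediate estimate \eqref{etr} appearing in the proof of Proposition \ref{p7} gives, by the identical summation, $\lV T(z^k)-z^k\rV\to 0$, which is the cleanest way to control the $T$-residual directly.

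Then comes the identification of a cluster point. Since $\{z^k\}$ is bounded, it admits a subsequence $z^{k_j}\to\bar z$. Because $P_U$ is continuous and $\lV P_U(z^{k_j})-z^{k_j}\rV\to 0$, passing to the limit gives $P_U(\bar z)=\bar z$, i.e. $\bar z\in U$. Because $T$ is firmly nonexpansive, hence continuous, and $\lV T(z^{k_j})-z^{k_j}\rV\to 0$, the same limiting argument gives $T(\bar z)=\bar z$. Therefore $\bar z\in{\rm Fix}(T,P_U)$. Finally I would upgrade subsequential to full convergence: applying the Fej\'er property with the \emph{specific} point $y=\bar z$ shows $\{\lV z^k-\bar z\rV\}$ converges, and since $\lV z^{k_j}-\bar z\rV\to 0$ along the subsequence, the whole sequence satisfies $\lV z^k-\bar z\rV\to 0$, that is, $z^k\to\bar z\in{\rm Fix}(T,P_U)$.

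I expect the only genuinely delicate point to be verifying that the cluster point lies in $F(T)$. If one argues solely from the stated form \eqref{e15}, the two available residuals control only $\dist(z^k,U)$ and $\lV z^{k+1}-T(z^k)\rV$, from which one gets $z^{k_j+1}\to T(\bar z)$ and $T(\bar z)\in U$, but concluding $T(\bar z)=\bar z$ then needs an extra step (for instance showing $\lV z^{k+1}-z^k\rV\to 0$, or invoking the acute-angle property of Proposition \ref{pp1}). This is precisely why I would instead extract $\lV T(z^k)-z^k\rV\to 0$ from \eqref{etr}, which renders the fixed-point identification immediate; everything else is the routine Fej\'er machinery.
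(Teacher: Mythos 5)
Your argument is correct and follows essentially the same route as the paper: the paper's proof of Proposition \ref{p8} likewise works from the intermediate inequality \eqref{etr} (restated there as \eqref{et17}), extracts $\lV T(z^k)-z^k\rV\to 0$ by telescoping, identifies a cluster point in ${\rm Fix}(T,P_U)$ via continuity of $T$ and the fact that $\{z^k\}\subset U$, and upgrades to full convergence by Fej\'er monotonicity with $y=\bar z$. Your closing remark about the delicacy of using \eqref{e15} as literally stated, and the decision to fall back on \eqref{etr}, matches exactly what the paper does.
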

     
\begin{proof} Take any $y\in$ Fix$(T,P_U)$. By \eqref{ee14}, $z^{k+1}=PU(T(z^k))$. Using \eqref{aab11}, we get
\begin{equation}\label{et17}
\lV z^{k+1}-y\rV^2\le\lV z^k-y\rV^2-\lV P_U(T(z^k))-T(z^k)\rV^2-\lV T(z^k)-z^k\rV^2  
\le\lV z^k-y\rV^2.
\end{equation}

It follows from \eqref{et17} that $\lV z^{k+1}-y\rV^2\le\lV z^k-y\rV$ for all $k\in\N$, so that $\{z^k\}$ 
is bounded and $\{\lV z^k-y\rV\}$ is nonincreasing
and nonnegative, therefore convergent.

Hence, rewriting \eqref{et17} as
$$
\lV P_U(T(z^k))-T(z^k)\rV^2+\lV T(z^k)-z^k\rV^2\le\lV z^k-y\rV^2-\lV z^{k+1}-y\rV^2,
$$
we conclude that 
\begin{equation}\label{ata18}
\lim_{k\to\infty}\lV T(z^k)-z^k\rV=0.
\end{equation}
Let $\bar z$ be a cluster point of the bounded sequence $\{z^k\}$.  Taking limits in \eqref{ata18} along a subsequence 
converging to $\bar z$, and using the continuity of $T$, resulting from its nonexpansiveness, we get that 
$T(\bar z)=\bar z$. Since $z^k\in U$ for all $k\in\N$ by \eqref{ee14}, we have that $\bar z\in U$, so that $\bar z\in$ Fix$(T,P_U)$. 
Taking now $y=\bar z$ in \eqref{et17},
we conclude that $\{\lV z^k-\bar z\rV\}$ is convergent, and since a subsequence of this sequence
converges to $0$, the whole sequence $\{\lV z^k-\bar z\rV\}$ converges
to $0$, i.e., $\lim_{k\to\infty}z^k=\bar z\in$ Fix$(T,P_U)$.
\end{proof}

Now we proceed to the convergence analysis of CRM applied to FPP.
Let $T:\re^n\to\re^n$ be a firmly nonexpansive operator, $U\subset\re^n$ an affine manifold, and $P_U:\re^n\to\re^n$ the orthogonal projection onto $U$.
We assume that Fix$(T,P_U)\ne\emptyset$.
We denote as $R, R_U$ the reflection operators related to $T,P_U$ respectively, i.e., 
$R(x)=2T(x)-x, R_U(x)=2P_U(x)-x$.
We define $C:\re^n\to\re^n$ as the the CRM operator, i.e., $C(z)=$ circ$\{z,R(z),R_U(R(z))\}$, where ``circ" denotes the circumcenter of three
points, as defined in Section \ref{s1}. We also define $S:\re^n\to\re^n$ as $S(x)=P_U(T(x))$, so that $S$ can be seen the MAP operator. 

 We will prove that, starting from any initial point $x^0\in U,$ the sequence $\{ x^k\}$ generated by CRM, defined as
$x^{k+1}=C(x^k)$, converges to a point in Fix$(T,P_U)$.

Our convergence analysis is close to the one in \cite{ABBIS} for CRM applied to CFP, but with several differences, resulting from the fact that now 
$T$ is an arbitrary firmly nonexpansive operator, rather than the orthogonal projection onto a convex set. One of differences is the use of the next 
property of circumcenters, which will substitute for a specific property of orthogonal projections.

\begin{proposition}\label{p9}
For all $x\in\re^n$, $\langle x-T(x),C(x)-T(x)\rangle =0$.
\end{proposition}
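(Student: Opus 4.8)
The plan is to use only the defining equidistance property of the circumcenter, and in fact only one of the three equalities it provides. Since $C(x)=\mathrm{circ}\{x,R(x),R_U(R(x))\}$ is by construction equidistant from the three generating points, we have in particular $\lV C(x)-x\rV = \lV C(x)-R(x)\rV$; the third point $R_U(R(x))$ will play no role in this particular identity.

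First I would convert this equidistance into a perpendicular-bisector relation. Writing $C=C(x)$ and $R=R(x)$, expanding $\lV C-x\rV^2=\lV C-R\rV^2$ and cancelling $\lV C\rV^2$ gives $2\langle C, R-x\rangle = \lV R\rV^2-\lV x\rV^2 = \langle R+x, R-x\rangle$, which rearranges to
\begin{equation*}
\Big\langle C-\tfrac{x+R}{2},\, R-x\Big\rangle = 0;
\end{equation*}
that is, $C$ lies on the perpendicular bisector of the segment joining $x$ and $R(x)$.

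The decisive step is the observation that, because $R$ is the reflection $R(x)=2T(x)-x$, the midpoint and the direction of this segment both collapse onto $T(x)$: the midpoint is $\tfrac{x+R(x)}{2}=T(x)$, while the direction is $R(x)-x=2(T(x)-x)$. Substituting these into the perpendicular-bisector relation yields $\langle C(x)-T(x),\,2(T(x)-x)\rangle = 0$, which is precisely the claimed identity after discarding the factor $2$ and using $T(x)-x=-(x-T(x))$.

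I expect no genuine obstacle here: the statement is an immediate geometric consequence of the circumcenter lying on the perpendicular bisector of the segment from $x$ to $R(x)$, together with the elementary fact that $T(x)$ is exactly the midpoint of $x$ and its reflection $R(x)$. The only point deserving a word of caution is that the circumcenter is defined for three non-collinear points (as in Section \ref{s1}); in the degenerate configurations one works with the usual extended definition of the circumcenter operator, under which the equidistance $\lV C(x)-x\rV=\lV C(x)-R(x)\rV$ still holds, so the argument is unaffected.
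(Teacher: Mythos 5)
Your proof is correct and follows essentially the same route as the paper's: both start from the equidistance $\lV C(x)-x\rV=\lV C(x)-R(x)\rV$, expand, and use the fact that $T(x)$ is the midpoint of $x$ and $R(x)$ to land on the claimed orthogonality. Your perpendicular-bisector phrasing is just a geometric repackaging of the paper's algebraic rearrangement, so there is nothing substantive to add.
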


\begin{proof}
By the definition of the reflection, for all $x\in\re^n$, 
\begin{equation}\label{e18}
T(x)=\frac{1}{2}(R(x)+x).
\end{equation}  
By the definition of circumcenter, for all $x\in\re^n$, 
\begin{equation}\label{e19}
\lV C(x)-x\rV^2=\lV C(x)-R(x)\rV^2.
\end{equation}
Expanding \eqref{e19} and rearranging, we get
\begin{equation}\label{e20}
2\langle x-R(x),C(x)\rangle=\lV x\rV^2-\lV R(x)\rV^2.
\end{equation}
Substracting $2\langle x-R(x),T(x)\rangle$ from both sides of \eqref{e20} and using \eqref{e18},we obtain
$$
4\langle x-T(x),C(x)-T(x)\rangle = 2\langle x-R(x),C(x)-T(x)\rangle=\lV x\rV^2-\lV R(x)\rV^2 -2\langle x-R(x),T(x)\rangle
$$
$$
=\lV x\rV^2-\lV R(x)\rV^2 -\langle x-R(x),x+R(x)\rangle =0,
$$
which implies the result.
\end{proof}

Next we establish a basic property of the circumcenter, which ensures that the CRM sequence, starting at a point in $U$, remains in $U$. 

\begin{proposition}\label{pp9} 
If $z\in U$ then $C(z)\in U$.
\end{proposition}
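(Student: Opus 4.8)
The plan is to exploit the reflection symmetry built into the three points that define $C(z)$. Recall that $R_U=2P_U-I$ is the reflection across the affine manifold $U$. Since orthogonal projection onto an affine set differs from projection onto its direction subspace only by a translation, one checks directly that $R_U$ is an isometry of $\re^n$, that it is an involution (the reflection across an affine manifold satisfies $R_U\circ R_U=I$), and that its fixed-point set is exactly $U$: indeed $R_U(x)=x$ iff $P_U(x)=x$ iff $x\in U$. This last identity is what we ultimately want to invoke, so it suffices to prove that $C(z)$ is fixed by $R_U$.

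First I would record how $R_U$ acts on the defining points $z$, $R(z)$, $R_U(R(z))$, where $C(z)=\mathrm{circ}\{z,R(z),R_U(R(z))\}$. Since $z\in U$ we have $R_U(z)=z$. Since $R_U$ is an involution we have $R_U(R_U(R(z)))=R(z)$. Hence $R_U$ maps the unordered set $\{z,R(z),R_U(R(z))\}$ onto itself, fixing $z$ and interchanging $R(z)$ with $R_U(R(z))$.

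Next I would use the equivariance of the circumcenter under isometries. The circumcenter of three non-collinear points is the unique point of their affine hull equidistant from all three; since $R_U$ is an isometry it preserves both distances and affine hulls, so $\mathrm{circ}\{R_U(p_1),R_U(p_2),R_U(p_3)\}=R_U(\mathrm{circ}\{p_1,p_2,p_3\})$, and the left-hand side is independent of the labeling of the points. Applying this to the three defining points and using the previous step, the image set coincides with the original set, so $R_U(C(z))=C(z)$. By the fixed-point characterization of $U$ this yields $C(z)\in U$, as required.

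The step I expect to require the most care is the equivariance claim together with well-definedness: one must ensure the circumcenter is actually defined (the three points non-collinear, in the sense of Section~\ref{s1}) and that its characterization as the unique equidistant point of the affine hull is genuinely isometry-equivariant. A more computational alternative, in the spirit of Proposition~\ref{p9}, would instead write $C(z)$ as an affine combination $\lambda_0 z+\lambda_1 R(z)+\lambda_2 R_U(R(z))$ and use the defining equality $\lV C(z)-R(z)\rV=\lV C(z)-R_U(R(z))\rV$ to force $\lambda_1=\lambda_2$. Since $R(z)$ and $R_U(R(z))$ differ by the vector $2(R(z)-P_U(R(z)))$ orthogonal to $U$, while $z$ and the midpoint $P_U(R(z))$ lie in $U$, this equality of coefficients cancels the component of $C(z)$ orthogonal to $U$, again giving $C(z)\in U$.
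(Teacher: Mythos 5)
Your proof is correct, but it takes a genuinely different route from the paper's. The paper argues by cases on the configuration of the three points $z$, $R(z)$, $R_U(R(z))$: if $R(z)\in U$ all three lie in $U$ and so does their affine hull; in the remaining cases it shows, by a dimension count on $E\cap U$ (with $E$ the affine hull of the three points), that $C(z)$ must lie on the line through the two points $z$ and $P_U(R(z))$, both of which belong to $U$. You instead observe that $R_U$ is an affine isometric involution whose fixed-point set is exactly $U$, that it permutes the defining set (fixing $z$ and swapping $R(z)$ with $R_U(R(z))$), and that the circumcenter --- the unique point of the affine hull equidistant from the three points --- is equivariant under isometries; hence $R_U(C(z))=C(z)$, i.e.\ $C(z)\in U$. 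Your symmetry argument is shorter, dispenses with the case analysis, and carries over verbatim to circumcenters of larger point sets and to the Hilbert-space setting (it is essentially the viewpoint of the circumcentered-isometry literature cited in the paper). What the paper's more pedestrian computation buys in exchange is the explicit identification of $E\cap U$ with the line through $z$ and $P_U(R(z))=2S(z)-z$, which foreshadows the collinearity of $z$, $S(z)$ and $C(z)$ exploited in Proposition \ref{p10}. Both arguments rest on the circumcenter being well defined as the unique equidistant point of the affine hull, so the degenerate (collinear) configurations require the same care in either approach; you flag this, and your computational alternative --- using the equidistance of $C(z)$ from $R(z)$ and $R_U(R(z))$ to cancel the component of $C(z)$ orthogonal to $U$ --- handles it correctly, since when $R(z)\in U$ the conclusion is immediate anyway.
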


\begin{proof} 
We consider three cases.
If $R(z)\in U$ then $R_U(R(z))=R(z)$, in which case $z,R(z),R_U(R(z))\in U$, so that the affine hull of these
three points is contained in $U$. Since by definition $C(z)$ belongs to this affine hull, the result holds.

If $z=P_U(R(z))$ then the affine hull of $\{z,R(z),R_U(R(z))\}$ is the line determined by $z$ and $R(z)$ and $C(z)=$ 
circ$\{z,R(z),R_U(R(z))\}=P_U(R(z))=z\in U$, so that the result holds. 

Assume that $z\ne P_U(R(z))$ and that $R(z)\notin U$. We claim that
$C(z)$ belongs to the line passing through $z$ and $P_U(R(z))$. Observe that, since $\lV C(z)-R(z)\rV=\lV C(z)-R_U(R(z))\rV$,
$C(z)$ belongs to the hyperplane orthogonal to $R(z)-R_U(R(z))$ passing through $\frac{1}{2}(R(z),R_U(R(z)))=P_U(R(z))$, say $H$.
On the other hand, by definition, $C(z)$ belongs to the affine manifold $E$ spanned by $z,R(z),R_U(z)$. So, $C(z)\in E\cap U$.
Since $R(z)\notin U$, dim$(E\cap U)<$ dim$(E)\le 2$. Note that $P_U(z)=\frac{1}{2}\left(R(z)+R_U(R(z))\right)=P_U(R(z))$ belongs to $E$.
Hence the line through $z,P_U(R(z))$, say $L$, is contained in $E$, and by a dimensionality argument we conclude that 
$L=E$. Since $C(z)\in E$, we get that $C(z)\in L$. Since $z,P_U(R(z))$ belong to $U$, we have that $C(z)\in L\subset U$, completing the proof.   
\end{proof}  

We continue with an important intermediate result.

\begin{proposition}\label{p10}
Consider the operators $C,S:\re^n\to\re^n$ defined above. Then $S(x)$ belongs to the segment between 
$x$ and $C(x)$ for all $x\in U$.
\end{proposition}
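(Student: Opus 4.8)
The plan is to collapse the whole statement onto a single line and reduce it to one scalar inequality. Fix $x\in U$. Since $U$ is an affine manifold containing $x$, the operator $P_U$ is affine and $P_U(x)=x$; together with the identity $T(x)=\tfrac12\bigl(x+R(x)\bigr)$ coming from the definition $R(x)=2T(x)-x$, this lets me rewrite the MAP operator as
\[
S(x)=P_U\bigl(T(x)\bigr)=\tfrac12\,P_U(x)+\tfrac12\,P_U(R(x))=\tfrac12\bigl(x+P_U(R(x))\bigr).
\]
Thus $S(x)$ is precisely the midpoint of $x$ and $P_U(R(x))$, and in particular it lies on the line $L$ through $x$ and $P_U(R(x))$. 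Writing a generic point of $L$ as $x+s\,d$ with $d:=P_U(R(x))-x$, the points $x$, $S(x)$, $P_U(R(x))$ correspond to $s=0,\tfrac12,1$. Hence, once I know that $C(x)$ also lies on $L$, say $C(x)=x+t\,d$, the claim ``$S(x)\in[x,C(x)]$'' becomes the single inequality $t\ge\tfrac12$.

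The next step is to establish that $C(x)$ indeed lies on $L$. By Proposition \ref{pp9} we have $C(x)\in U$, while by the very definition of the circumcenter $C(x)$ belongs to the affine hull $E$ of $\{x,R(x),R_U(R(x))\}$; therefore $C(x)\in E\cap U$. In the generic configuration the three points are non-collinear, so $\dim E=2$ and $R(x)\notin U$; since $x,P_U(R(x))\in E\cap U$ while $R(x)\in E\setminus U$ forces $E\cap U\subsetneq E$, a dimension count yields $E\cap U=L$, whence $C(x)\in L$. This is exactly the line-membership argument already carried out inside the proof of Proposition \ref{pp9}.

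With $C(x)=x+t\,d$ in hand, I would then feed the parametrization into Proposition \ref{p9}. Setting $a:=R(x)-x$ (so $T(x)=x+\tfrac12a$ and $x-T(x)=-\tfrac12a$), the orthogonality $\langle x-T(x),C(x)-T(x)\rangle=0$ becomes $\langle a,\,t\,d-\tfrac12a\rangle=0$, i.e.\ $t\,\langle a,d\rangle=\tfrac12\lV a\rV^2$. Because $d=P_U(R(x))-P_U(x)$ is the orthogonal projection of $a$ onto the direction subspace of $U$, one has $\langle a,d\rangle=\lV d\rV^2$ and $\lV d\rV\le\lV a\rV$, so
\[
t=\frac{\lV a\rV^2}{2\,\lV d\rV^2}\ge\frac12 .
\]
Consequently $S(x)=x+\tfrac12 d=x+\tfrac{1}{2t}\bigl(C(x)-x\bigr)$ with $\tfrac{1}{2t}\in(0,1]$, which is the assertion.

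It remains to dispose of the degenerate configurations, which I expect to be fiddly bookkeeping rather than a genuine difficulty. If $d=0$, i.e.\ $P_U(R(x))=x$, then $S(x)=x$ and the corresponding case of Proposition \ref{pp9} gives $C(x)=x$, so the claim is trivial; if instead $R(x)\in U$ (equivalently $a$ lies in the direction subspace of $U$, so $d=a$), then $R_U(R(x))=R(x)$, the circumcenter degenerates to $T(x)$, and $S(x)=T(x)=C(x)$, the boundary case $t=\tfrac12$. The real content is the generic case, and the step I expect to be the main obstacle is the collinearity $C(x)\in L$: Proposition \ref{p9} together with $C(x)\in U$ only cuts out an affine subspace of $U$ of codimension one, and it does not locate $C(x)$ on a line until it is combined with membership in the two-dimensional hull $E$ and the dimension count above.
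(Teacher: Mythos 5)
Your proof is correct, and its overall skeleton coincides with the paper's: you express $S(x)$ as the midpoint of $x$ and $P_U(R(x))$, you place $C(x)$ on the line $L=E\cap U$ by the dimension count borrowed from Proposition \ref{pp9}, and you reduce the claim to a one-dimensional scalar inequality, disposing of the same degenerate configurations ($d=0$, and $R(x)\in U$) in the same way. The difference is in how the scalar inequality is obtained. The paper writes $C(x)=x+\eta\left(S(x)-x\right)$ and derives $\eta\ge 1$ from the circumcenter's equidistance $\lV C(x)-x\rV=\lV C(x)-R(x)\rV$ combined with nonexpansiveness of $P_U$ (which gives $\lV C(x)-R(x)\rV\ge\lV C(x)-P_U(R(x))\rV$ because $C(x)\in U$), ending with $\lv\eta\rv\ge\lv 2-\eta\rv$. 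You instead invoke the orthogonality of Proposition \ref{p9} together with the observation that $d=P_U(R(x))-x$ is the orthogonal projection of $a=R(x)-x$ onto the direction subspace of $U$, so $\langle a,d\rangle=\lV d\rV^2$; this yields the exact location $C(x)=x+\bigl(\lV a\rV^2/(2\lV d\rV^2)\bigr)d$, and the required bound $t\ge\tfrac12$ follows from $\lV d\rV\le\lV a\rV$. Your route buys an exact formula for the position of the circumcenter on $L$ (equivalently, $\eta=\lV a\rV^2/\lV d\rV^2$ in the paper's notation), which is slightly more information than the paper extracts; the paper's route does not need Proposition \ref{p9} at this stage (it is reserved for Proposition \ref{p11}) and uses only equidistance plus nonexpansiveness. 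Both arguments are sound.
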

\begin{proof}
Let $E$ denote the affine manifold spanned by $x, R(x)$ and $R_U(R(x))$. By definition, the circumcenter of these
three points, namely $C(x)$, belongs to $E$. We claim that $S(x)$ also belongs to $E$. We proceed to prove the claim.
Since $U$ is an affine manifold, $P_U$ is an affine operator, so that $P_U(\alpha x+(1-\alpha)x')=\alpha P_U(x)+(1-\alpha)P_U(x')$
for all $\alpha\in\re$ and all $x,x'\in\re^n$. Thus $R_U(R(x))=2P_U(R(x))-R(x)$, so that 
\begin{equation}\label{e21}
P_U(R(x))=\frac{1}{2}\left(R_U(R(x))+R(x)\right).
\end{equation}
On the other hand, using the affinity of $P_U$, the definition of $S$ and the assumption that $x\in U$, we have
\begin{equation}\label{e22} 
P_U(R(x))=P_U(2T(x)-x)=2P_U(T(x))-P_U(x)=2S(x)-x,
\end{equation}
so that
\begin{equation}\label{e23}
S(x)=\frac{1}{2}\left(P_U(R(x))+x\right).
\end{equation}
Combining \eqref{e21} and \eqref{e23},
$$
S(x)=\frac{1}{2}x+\frac{1}{4}R_U(R(x))+\frac{1}{4}R(x),
$$ 
i.e., $S(x)$ is a convex combination of $x, R_U(R(x))$ and $R(x)$. Since these three points belong to $E$, the same
holds for $S(x)$ and the claim holds. 

We observe now that $x\in U$ by assumption, $S(x)\in U$ by definition,
and $C(x)\in U$ by Proposition \ref{pp9}. Now we consider three cases: if dim$(E\cap U)=0$ then $x,S(x)$ and $C(x)$
coincide and the result holds trivially. If dim$(E\cap U)=2$ then $E\subset U$, so that $R(x)\in U$ and hence
$R_U(R(x))=R(x)$, in which case $C(x)$ is the midpoint between $x$ and $R(x)$, which is precisely $T(x)$.
Hence, $T(x)\in U$, so that $S(x)=P_U(T(x))=B(x)=C(x)$, implying that $S(x)$ and $C(x)$ coincide, and the
result holds trivially. The interesting case is the remaining one, i.e., dim$(E\cap U)=1$. In this case $x, S(x)$ and
$C(x)$ lie in a line, so that we can write $C(x)=x+\eta(S(x)-x)$ with $\eta\in\re$, and it suffices to prove that $\eta\ge 1$.

By the definition of $\eta$, 
\begin{equation}\label{e24}
\lV C(x)-x\rV=\lv\eta\rv\,\lV T(x)-x\rV.
\end{equation}
Since $C(x)\in U$, nonexpansiveness of $P_U$ implies that
\begin{equation}\label{e25}
\lV C(x)-R(x)\rV\ge\lV C(x)-P_U(R(x))\rV.
\end{equation}
Then
$$
\lV C(x)-x\rV=\lV C(x)-R(x)\rV\ge\lV C(x)-P_U(R(x))\rV
=\lV\left(C(x)-x\right)-\left(P_U(R(x))-x\right)\rV
$$
\begin{equation}\label{e26}
=\lV\eta\left(S(x)-x\right)-2\left(S(x)-x\right)\rV=\lv\eta-2\rv\,\lV S(x)-x\rV,
\end{equation}
using the definition of the circumcenter in the first equality, \eqref{e25} in the inequality, and 
the definition of $\eta$ and $S$ in the third equality. Combining \eqref{e24} and \eqref{e26},
we get 
$$
\lv\eta\rv\,\lV S(x)-x\rV\ge\lv\eta-2\rv\,\lV S(x)-x\rV,
$$
implying that $\lv\eta\rv\ge\lv 2-\eta\rv$,
which holds only when $\eta\ge 1$, completing the proof.
\end{proof}

We continue with a key result for the convergence analysis of CRM, comparing the behavior of the CRM and the MAP operators. 
Again the argument in this proof differs from the case of CRM applied to MAP, 
presented in \cite{ABBIS}.

\begin{proposition}\label{p11}
With the notation of Proposition \ref{p10}, 
for all $y\in {\rm Fix}(T,P_U)$ and all $z\in U$, it holds that
\begin{itemize}
\item[i)]  $\lV C(z)-y\rV\le\lV S(z)-y\rV$,
\item[ii)] 
${\rm dist}\left(C(z),{\rm Fix}(T,P_U)\right)\le {\rm dist}\left(S(z),{\rm Fix}(T,P_U)\right)$,
\end{itemize}
\end{proposition}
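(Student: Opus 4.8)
The plan is to establish (i) first and deduce (ii) from it with essentially no extra work. For (ii), note that $\Fix(T,P_U)$ is closed and, by hypothesis, nonempty, so there is a point $\bar y\in\Fix(T,P_U)$ realizing the distance, i.e.\ $\lV S(z)-\bar y\rV=\dist(S(z),\Fix(T,P_U))$; applying (i) with $y=\bar y$ yields $\dist(C(z),\Fix(T,P_U))\le\lV C(z)-\bar y\rV\le\lV S(z)-\bar y\rV=\dist(S(z),\Fix(T,P_U))$. Thus all the content lies in (i).

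For (i), I would exploit that, by Proposition \ref{p10}, the points $z$, $S(z)$, $C(z)$ are collinear, so that $C(z)=z+\eta(S(z)-z)$ with $\eta\ge1$ (when $C(z)=S(z)$, which covers the degenerate subcases in the proof of Proposition \ref{p10}, both inequalities are trivial equalities, so I may assume this representation with $\eta\ge1$). Writing $v=S(z)-z$, a direct expansion of the two squared norms gives
$$\lV C(z)-y\rV^2-\lV S(z)-y\rV^2=(\eta-1)\bigl[2\langle z-y,v\rangle+(\eta+1)\lV v\rV^2\bigr],$$
so, since $\eta-1\ge0$, it suffices to prove $\langle y-z,v\rangle\ge\tfrac{\eta+1}{2}\lV v\rV^2$.

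To bound this inner product I would decompose $T(z)=S(z)+w$, where $w=T(z)-P_U(T(z))$. As $U$ is affine, $w$ is orthogonal to the direction subspace of $U$, and since $z,S(z),y\in U$ we have $\langle w,v\rangle=0$ and $\langle w,y-z\rangle=0$. Two ingredients then combine. First, Proposition \ref{p9}: substituting $z-T(z)=-(v+w)$ and $C(z)-T(z)=(\eta-1)v-w$ and using $\langle v,w\rangle=0$ turns $\langle z-T(z),C(z)-T(z)\rangle=0$ into the exact relation $\lV w\rV^2=(\eta-1)\lV v\rV^2$. Second, the acute-angle property of the firmly nonexpansive $T$ (Proposition \ref{pp1}) at $x=z$, namely $\langle T(z)-y,T(z)-z\rangle\le0$, which after expansion and the orthogonality of $w$ reads $\langle y-z,v\rangle\ge\lV v\rV^2+\lV w\rV^2$. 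Feeding the first relation into the second gives $\langle y-z,v\rangle\ge\eta\lV v\rV^2\ge\tfrac{\eta+1}{2}\lV v\rV^2$, the last step using $\eta\ge1$, which is exactly what is needed.

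The algebra is routine; the step I expect to require the most care — and which I regard as the main obstacle — is the geometric bookkeeping that legitimizes the reduction to $C(z)=z+\eta(S(z)-z)$ with $\eta\ge1$. One must verify that the cases $\dim(E\cap U)\in\{0,2\}$ arising in Proposition \ref{p10} force $C(z)=S(z)$, and accommodate $v=0$, so that the scalar computation is applied only with a genuine $\eta\ge1$ or else bypassed because both sides coincide. Everything else amounts to tracking the orthogonality relations that tie $T(z)$, its projection $S(z)$, and the fixed point $y$ together through Propositions \ref{p9} and \ref{pp1}.
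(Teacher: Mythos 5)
Your proof is correct and uses exactly the same ingredients as the paper's — Proposition \ref{p10} to write $C(z)=z+\eta(S(z)-z)$ with $\eta\ge 1$, Proposition \ref{p9}, the acute-angle property of Proposition \ref{pp1}, and the orthogonality of $T(z)-P_U(T(z))$ to the direction space of $U$ — differing only in that you carry out the inner-product bookkeeping in explicit coordinates along the line through $z$ and $C(z)$ (obtaining en route the identity $\lV T(z)-S(z)\rV^2=(\eta-1)\lV S(z)-z\rV^2$), whereas the paper packages the same three facts into a single decomposition of $\langle z-S(z),C(z)-y\rangle$ and concludes from the obtuse angle at $C(z)$. Part (ii) is handled identically in both.
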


\begin{proof}
\begin{itemize}
\item[i)]
Take $z\in U,y\in$ Fix$(T,P_U)$. If $z \in F(T)$, then the result follows
trivially, because then $P_U(T(z))=z=C(z)$ and there is nothing to prove. So, assume that $z\in U\setminus F(T)$. We claim that 
\begin{equation}\label{e27}
\lV P_U(T(z))-z\rV\le\lV T(z)-z\rV\le\lV C(z)-z\rV.
\end{equation}
For proving the first inequality in \eqref{e27},
we conclude, from the fact that $z\in U$ and an elementary property of orthogonal projections, that
\begin{equation}\label{e28}
\lV P_U(T(z))-z\rV\le\lV T(z)-z\rV.
\end{equation}
Since $R(z)=2T(z)-z$, we get that
\begin{equation}\label{e29}
\lV R(z)-z\rV=2\lV T(z)-z\rV.
\end{equation}
Using \eqref{e28} and \eqref{e29},
$$
\lV T(z)-z)\rV= \frac{1}{2}\lV R(z)-z\rV=
\frac{1}{2}\lV(R(z)-C(z)+C(z)-z)\rV
$$
\begin{equation}\label{e30}
\le\frac{1}{2}\left(\lV R(z)-C(z)\rV+\lV C(z)-z\rV\right)
=\frac{1}{2}\left(\lV z-C(z)\rV+\lV C(z)-z\rV\right)
=\lV C(z)-z\rV.
\end{equation}
where the third equality holds because $C(z)$ is equidistant from $z,R(z),$ and $R_U(R(z)).$ 
The claim follows then from \eqref{e27} and \eqref{e30}.
 
By Proposition \ref{p10}, $T(z)$ belongs to the segment between $z$ and $C(z)$, i.e., there exists 
$\alpha\in [0,1]$ such that
$S(z)=\alpha C(z)+(1-\alpha)z$ and $\alpha<1$ because $z\notin F(T)$, so that
\begin{equation}\label{e31}
S(z)-C(z)=\dfrac{1-\alpha}{\alpha}(z-S(z)).
 \end{equation}
Note that
\begin{equation}\label{e32}
\langle z-S(z),C(z)-y\rangle =\langle z-T(z),C(z)-T(z)\rangle+\langle z-T(z),T(z)-y\rangle+
 \langle T(z)-S(z),C(z)-y\rangle.
\end{equation}		
Now we look at the three terms in the right hand side of \eqref{e32}. The first one vanishes as a consequence of Proposition \ref{p9}. 
The third one vanishes because $S(z)=P_U(T(z))$, and $U$ is an affine manifold, so that $T(z)-S(z)$ is orthogonal to any vector in $U$, as is the case for
$C(z)-y$, since $y\in U$ by assumption and $C(z)\in U$ by Proposition \ref{pp9}. The second term is nonnegative by Proposition \ref{pp1}.     
It follows hence from \eqref{e32} that
\begin{equation}\label{e35}
\langle z-S(z),C(z)-y\rangle\ge 0.
\end{equation}

 Now, \eqref{e35} together with \eqref{e31} gives us
 \begin{equation}\label{e36}
 \langle S(z)-C(z),y-C(z)\rangle=\frac{1-\alpha}{\alpha}\langle z-S(z),y-C(z)\rangle\le 0.
 \end{equation}
 It follows from \eqref{e36} that  $\lV C(z)-y\rV\le\lV S(z)-y\rV$ for all $y\in$ Fix$(T,P_U)$ and all $z\in U$, 
establishing (i).
 
\item[ii)]
Let $\bar{z}, \hat{z}\in$ Fix$(T,P_U)$ realize the distance from $C(z),S(z)$
to Fix$(T,P_U)$ respectively. Then, in view of (i),
$$
{\rm dist}(C(z),{\rm Fix}(T,P_U))=\lV C(z)-\bar z\rV\le\lV C(z)-\hat z\rV\le\lV S(z)-\hat z\rV={\rm dist}(S(z),{\rm Fix}(T,P_U))
$$
proving (ii).
\end{itemize}
\end{proof}
  
Next we complete the convergence analysis of CRM applied to FPP. Here again, the proofline differs from the one in \cite{ABBIS},
where a specific property of orthogonal projections was used to characterize $C(z)$ as the projection onto a certain set, which
does not work when $T$ is an arbitrary firmly nonexpansive operator. 	
	
\begin{Thm}\label{t1}
Let $T:\re^n\to\re^n$ be a firmly nonexpansive operator and $U\subset\re^n$ an affine manifold. Assume that {\rm Fix}$(T,P_U)\ne\emptyset$.
Let $\{x^k\}$ be the sequence generated by CRM for solving FPP$(T,P_U)$, i.e., $x^{k+1}=C(x^k)$. If $x^0\in U$, then $\{x^k\}$ 
is contained in $U$ and converges to a point in {\rm Fix}$(T,P_U)$.
\end{Thm}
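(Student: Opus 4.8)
The plan is to reproduce, for the CRM sequence $\{x^k\}$, the Fej\'er-monotonicity argument already carried out for the MAP sequence in Proposition \ref{p8}, using Proposition \ref{p11}(i) as the bridge that lets each CRM step inherit the contraction properties of the corresponding MAP step. First I would check that $\{x^k\}\subset U$: this is immediate by induction, since $x^0\in U$ by hypothesis and, by Proposition \ref{pp9}, $x^{k}\in U$ implies $x^{k+1}=C(x^k)\in U$.

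Next I would fix an arbitrary $y\in{\rm Fix}(T,P_U)$ and combine two facts. Since $x^k\in U$, Proposition \ref{p11}(i) gives $\|x^{k+1}-y\|=\|C(x^k)-y\|\le\|S(x^k)-y\|$. On the other hand, the descent inequality established in the proof of Proposition \ref{p7} (used again as \eqref{et17} in Proposition \ref{p8}) yields, for $x^k\in U$, $\|S(x^k)-y\|^2\le\|x^k-y\|^2-\|T(x^k)-x^k\|^2-\|S(x^k)-T(x^k)\|^2$. Chaining these gives $\|x^{k+1}-y\|^2\le\|x^k-y\|^2-\|T(x^k)-x^k\|^2\le\|x^k-y\|^2$. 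Hence $\{\|x^k-y\|\}$ is nonincreasing and bounded below, therefore convergent, and $\{x^k\}$ is bounded; moreover, rewriting the inequality as $\|T(x^k)-x^k\|^2\le\|x^k-y\|^2-\|x^{k+1}-y\|^2$ and letting $k\to\infty$ forces $\lim_{k\to\infty}\|T(x^k)-x^k\|=0$.

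The remainder is the standard cluster-point closing argument, exactly as in Proposition \ref{p8}. Let $\bar x$ be any cluster point of the bounded sequence $\{x^k\}$, say $x^{k_j}\to\bar x$. Continuity of $T$ (from nonexpansiveness) together with $\|T(x^{k_j})-x^{k_j}\|\to 0$ gives $T(\bar x)=\bar x$; since each $x^{k_j}\in U$ and $U$ is closed, $\bar x\in U$, so $\bar x\in{\rm Fix}(T,P_U)$. Finally, applying the Fej\'er inequality with the particular choice $y=\bar x$ shows that $\{\|x^k-\bar x\|\}$ converges; as one of its subsequences tends to $0$, the whole sequence tends to $0$, i.e. $x^k\to\bar x\in{\rm Fix}(T,P_U)$.

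Most of the genuine work has already been done in Propositions \ref{p9}--\ref{p11}, so I expect the theorem itself to be a routine assembly rather than to present a serious obstacle. The one point demanding care is that the Fej\'er decrease must retain the term $\|T(x^k)-x^k\|^2$: it is precisely this term, not the projection residual $\|P_U(x^k)-x^k\|^2$ (which vanishes on $U$), that delivers asymptotic regularity and hence guarantees that cluster points are common fixed points. Thus the main thing to verify is that the correct form of the MAP descent inequality is invoked and that the comparison in Proposition \ref{p11}(i) is applied only at iterates lying in $U$, which the induction in the first step guarantees.
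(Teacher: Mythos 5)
Your proposal is correct and follows essentially the same route as the paper: induction via Proposition \ref{pp9} for containment in $U$, the chain $\lV x^{k+1}-y\rV\le\lV S(x^k)-y\rV\le\lV x^k-y\rV$ from Proposition \ref{p11}(i) and Proposition \ref{p7} to get Fej\'er monotonicity, and the standard cluster-point argument. The only (harmless) difference is that you retain the residual $\lV T(x^k)-x^k\rV^2$ to conclude $T(\bar x)=\bar x$ directly, whereas the paper keeps $\lV S(x^k)-x^k\rV^2$ and concludes $\bar x\in F(S)={\rm Fix}(T,P_U)$.
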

 
\begin{proof}
The fact that $\{x^k\}\subset U$ results from invoking Proposition \ref{pp9} in an inductive way, starting
with the assumption that $x^0\in U$. 

Take any $y\in$ Fix$(T,P_U)$ Then,
\begin{equation}\label{e37}
\lV x^{k+1}-y\rV^2=\lV C(x^k)-y\rV^2
\le\lV S(x^k)-y\rV^2\le\lV x^k-y\rV^2-\lV S(x^k)-x^k\rV^2 
\end{equation}
where the first inequality follows from Proposition \ref{p11}(i), and the second one follows from Proposition \ref{p7}, since $P_U(x^k)=x^k$
by Proposition \ref{pp9} and $S=P_U\circ T$.

\eqref{e37} says that $\{x^k\}$ is Fej\'er monotone with respect to Fix$(T,P_U)$, and the remainder of the proof is standard.
By \eqref{e37},
 $\{x^k\}$ is bounded and $\{\lV x^k-y\rV\}$ is nonincreasing and nonnegative, hence
convergent, for all $y\in$ Fix$(T,P_U)$. It follows also from \eqref{e37} that 
\begin{equation}\label{e38}
\lim_{k\to\infty} S(x^k)-x^k=0.
\end{equation}
Let $\bar x$ be any cluster point of  $\{x^k\}$. Taking limits in \eqref{e38} along a subsequence converging to $\bar x$,
we conclude that $S(\bar x)=\bar x$, i.e., $\bar x\in F(S)=$ Fix$(T,P_U)$, so that all cluster points of $\{x^k\}$ belong to Fix$(T,P_U)$.
Looking now \eqref{e37} with $\bar x$ substituting for $y$, we get that $\{\lV x^k-\bar x\rV\}$ is a nonincreasing sequence with a subsequence
converging to $0$, so that the whole sequence $\{\lV x^k-\bar x\rV\}$ converges to $0$. It follows that $\bar x$ is the unique cluster point
of $\{x^k\}$, so that $\lim_{k\to\infty}x^k=\bar x\in$ Fix$(T,P_U)$.
\end{proof}

For future reference, we state the Fej\'er monotonicity of $\{x^k\}$  with respect to Fix$(T,P_U)$ as a corollary.
\begin{Cor}\label{c3}
With the notation of Theorem \ref{t1}, 
$\lV x^{k+1}-y\rV^2
\le\lV x^k-y\rV^2-\lV S(x^k)-x^k\rV^2$ for all $y\in {\rm Fix}(T,P_U)$ and all $k\in\na$.
\end{Cor}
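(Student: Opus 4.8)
The plan is to recognize that this corollary is not a new result at all, but simply a clean restatement of the chain of inequalities \eqref{e37} that was already assembled inside the proof of Theorem \ref{t1}. Consequently the task is purely one of bookkeeping: extract the outer inequality of that chain and record it for later reference. I would first invoke Theorem \ref{t1} itself to guarantee that $x^k\in U$ for every $k\in\N$, since this membership is exactly what licenses the application of the propositions that follow (each of which is stated for iterates lying in the affine manifold $U$).

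With $x^{k+1}=C(x^k)$ by definition of the CRM sequence, I would then reproduce the two links of \eqref{e37}. First, applying Proposition \ref{p11}(i) at the point $z=x^k\in U$ gives $\lV C(x^k)-y\rV\le\lV S(x^k)-y\rV$ for every $y\in{\rm Fix}(T,P_U)$, and squaring yields $\lV x^{k+1}-y\rV^2\le\lV S(x^k)-y\rV^2$. Second, I would bound $\lV S(x^k)-y\rV^2$ using the inequality \eqref{etr} produced inside the proof of Proposition \ref{p7}, evaluated at $x=x^k$; since $S=P_U\circ T$ this gives $\lV S(x^k)-y\rV^2\le\lV x^k-y\rV^2-\lV T(x^k)-x^k\rV^2-\lV S(x^k)-T(x^k)\rV^2$. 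Chaining the two displays then delivers the asserted bound.

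The only point demanding a moment's care—and the closest thing to an obstacle—is matching the final residual term, since \eqref{etr} naturally produces $\lV T(x^k)-x^k\rV^2+\lV S(x^k)-T(x^k)\rV^2$ whereas the corollary asks only for $\lV S(x^k)-x^k\rV^2$. This is resolved by an orthogonality observation: because $x^k\in U$ and $S(x^k)=P_U(T(x^k))$, the vector $T(x^k)-S(x^k)$ is normal to $U$ while $S(x^k)-x^k$ lies in $U$, so the two are orthogonal and Pythagoras gives $\lV T(x^k)-x^k\rV^2=\lV T(x^k)-S(x^k)\rV^2+\lV S(x^k)-x^k\rV^2$. Substituting and discarding the nonnegative normal-component terms leaves precisely $-\lV S(x^k)-x^k\rV^2$, which is the descent guaranteed by the statement. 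Since all of this was already carried out en route to \eqref{e37}, the proof reduces to the single sentence that the claimed inequality is exactly \eqref{e37}, valid for every $y\in{\rm Fix}(T,P_U)$ and every $k\in\N$.
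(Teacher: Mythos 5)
Your proof is correct and follows essentially the same route as the paper, whose entire argument is to cite the chain of inequalities \eqref{e37} already established in the proof of Theorem \ref{t1} from Proposition \ref{p11}(i) and Proposition \ref{p7}. Your explicit Pythagorean reconciliation of the residual $-\lV T(x^k)-x^k\rV^2-\lV S(x^k)-T(x^k)\rV^2$ coming from \eqref{etr} with the stated term $-\lV S(x^k)-x^k\rV^2$ is handled correctly and in fact supplies a small justification that the paper glosses over.
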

\begin{proof} 
The result follows from \eqref{e37}.
\end{proof}

\section{Linear convergence of CRM applied to FPP under an error bound condition}\label{s4}

In \cite{ABBIS}, when dealing with CFP with two convex sets, namely $K,U$, the following  {\it global error bound}, 
which we will call {\bf EB1}, was considered:

\medskip

\noindent {\bf EB1}: There exists $\bar\omega >0$ such that ${\rm dist}(x,K)\ge\bar\omega\, {\rm dist}(K\cap U)$ for all $x\in U$.

\medskip

Let us comment on the connection between
{\bf EB1} and other notions of error bounds which have been introduced in the past, all of them related to regularity assumptions 
imposed on the solutions of certain problems. If the problem at hand consists of solving $H(x)=0$ with a smooth 
$H:\re^n\to\re^m$, a classical regularity condition demands that $m=n$ and the Jacobian matrix of $H$ be 
nonsingular at a solution $x^*$, in which case, Newton's method, for instance, is known to enjoy superlinear or quadratic 
convergence. This condition implies local uniqueness of the solution $x^*$. For problems
with nonisolated solutions, a less demanding assumption is the
notion of {\it calmness} (see \cite{RoW}, Chapter 8, Section F), which requires that 
\begin{equation}\label{e39}
\frac{\lV H(x)\rV}{{\rm dist}(x,S^*)}\ge\omega
\end{equation}
for all $x\in\re^n\setminus S^*$ and some $\omega>0$, where $S^*$ is the solution set, i.e., the set of zeros of $H$.
Calmness, also called upper-Lipschitz continuity (see \cite{Rob}), is a classical example of error bound, and it holds 
in many situations, e.g., when $H$ is affine, by virtue of Hoffman's Lemma, (see \cite{Hof}). It implies that the solution set 
is locally a Riemannian manifold (see \cite{BeI}), and it has been used for establishing superlinear convergence of 
Levenberg-Marquardt methods in \cite{KYF}.

When dealing with convex feasibility problems, it seems reasonable to replace the numerator of \eqref{e39} 
by the distance from $x$ to some of the convex sets, as was done for instance, in \cite{ABBIS}, giving rise to {\bf EB1}. 
In \cite {ABBIS}, it was proved that under {\bf EB1}, MAP converges linearly, with asymptotic constant bounded above
by $\sqrt{1-\bar\omega^2}$, and that CRM also converges linearly, with a better upper bound for the asymptotic constant,
namely $\sqrt{(1-\bar\omega^2)/(1+\bar\omega^2)}$. In this section we will prove that in the FPP case both sequences 
converge linearly, with asymptotic constant bounded by $\sqrt{1-\bar\omega^2}$.

In the case of FPP, dealing with a firmly nonexpansive $T:\re^n\to\re^n$, and an affine manifold $U\subset\re^n$, the appropriate 
error bound turns out to be:

\medskip

\noindent {\bf EB}: There exists $\omega >0$ such that $\lV x-T(x)\rV\ge\omega\, {\rm dist}(x,{\rm Fix}(T,P_U)$ for all $x\in U$.

\medskip

We mention here that it suffices to consider an error bound less demanding than {\bf EB}, namely a local one, where the inequality
above is requested to hold only for points in $U\cap V$, where $V$ is a given set, e.g., a ball around the limit of 
the sequence generated by the algorithm, 
assumed to be convergent. An error bound of this type was used in \cite{AABBIS}. We refrain to do so just for the sake of a simpler exposition.

\begin{proposition}\label{p12}
Let $T:\re^n\to\re^n$ be a firmly nonexpansive operator, $U\subset\re^n$ an affine manifold and $C,S:\re^n\to\re^n$ the CRM and the MAP operators
respectively. 
Assume that {\rm Fix}$(T,P_U)\ne\emptyset$ and that 
{\bf EB} holds. Then
 \begin{equation}\label{e40}
{\rm dist}(C(x),{\rm Fix}(T,P_U))^2\le {\rm dist}(S(x),{\rm Fix}(T,P_U))^2\le (1-\omega^2)dist(x,{\rm Fix}(T,P_U))^2,
 \end{equation}  
 for all $x\in U$, with $\omega$ as in {\bf EB}.
\end{proposition}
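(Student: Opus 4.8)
The plan is to handle the two inequalities separately. The left-hand inequality, namely ${\rm dist}(C(x),{\rm Fix}(T,P_U))^2\le{\rm dist}(S(x),{\rm Fix}(T,P_U))^2$, is nothing but Proposition \ref{p11}(ii) with both (nonnegative) sides squared, so no extra work is needed there; it holds for every $x\in U$ irrespective of {\bf EB}. The whole substance of the statement therefore lies in the right-hand inequality, which is where {\bf EB} enters.

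First I would fix $x\in U$ and let $y\in{\rm Fix}(T,P_U)$ realize the distance from $x$ to ${\rm Fix}(T,P_U)$, so that $\lV x-y\rV={\rm dist}(x,{\rm Fix}(T,P_U))$; note that then $y\in U$ and $T(y)=y$. Since $y$ is a feasible competitor we have ${\rm dist}(S(x),{\rm Fix}(T,P_U))\le\lV S(x)-y\rV$, so it suffices to bound $\lV S(x)-y\rV^2$ by $(1-\omega^2)\lV x-y\rV^2$.

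To obtain this I would produce the Fej\'er-type estimate $\lV S(x)-y\rV^2\le\lV x-y\rV^2-\lV T(x)-x\rV^2$. The cleanest route is to treat the two operators individually rather than to quote \eqref{e15} verbatim: firm nonexpansiveness of $T$ together with $T(y)=y$ gives $\lV T(x)-y\rV^2\le\lV x-y\rV^2-\lV T(x)-x\rV^2$ (this is \eqref{et162}), while nonexpansiveness of $P_U$ together with $P_U(y)=y$ gives $\lV S(x)-y\rV^2=\lV P_U(T(x))-P_U(y)\rV^2\le\lV T(x)-y\rV^2$; composing the two is exactly the intermediate inequality \eqref{etr}. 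I would emphasize this step because the stated form \eqref{e15} of Proposition \ref{p7}, once specialized to $x\in U$, carries $\lV P_U(x)-x\rV^2=0$ and therefore discards precisely the $\lV T(x)-x\rV^2$ term that I need; the point is to retain that term.

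Finally I would invoke {\bf EB}: since $x\in U$, we have $\lV x-T(x)\rV^2\ge\omega^2\,{\rm dist}(x,{\rm Fix}(T,P_U))^2=\omega^2\lV x-y\rV^2$, whence $\lV S(x)-y\rV^2\le\lV x-y\rV^2-\omega^2\lV x-y\rV^2=(1-\omega^2)\lV x-y\rV^2$, and combining with ${\rm dist}(S(x),{\rm Fix}(T,P_U))\le\lV S(x)-y\rV$ closes the argument. The only genuine subtlety is the bookkeeping just described, namely ensuring that the $\lV T(x)-x\rV^2$ term survives the passage through $S=P_U\circ T$ so that {\bf EB} can actually be applied; everything else is routine.
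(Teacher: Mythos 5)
Your proof is correct and follows essentially the same route as the paper's: firm nonexpansiveness of $T$ at a fixed point to retain the $\lV T(x)-x\rV^2$ term, nonexpansiveness of $P_U$ to pass to $S(x)$, then {\bf EB} applied at $\bar y=P_{{\rm Fix}(T,P_U)}(x)$. Your handling of the left inequality by citing Proposition \ref{p11}(ii) directly is a mild streamlining of the paper's chain through $\tilde y,\hat y,\bar y$ (which amounts to reproving \ref{p11}(ii) via \ref{p11}(i)), and your caveat about not quoting \eqref{e15} verbatim is well taken --- the paper likewise rederives the needed estimate from \eqref{e1} rather than from \eqref{e15}.
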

\begin{proof}
 First note that if $x\in F(T)$, then \eqref{e40} holds trivially, so that we assume from now on that $T(x)\ne x$. Take any $y\in$ Fix$(T,P_U)$. 
Since $T$ is firmly nonexpansive and $y\in F(T)$, we have  
\begin{equation}\label{e41}
\lV x-y\rV^2\ge\lV T(x)-T(y)\rV^2+\lV (x-y)-(T(x)-T(y))\rV^2=\lV T(x)-y\rV^2+\lV x-T(x)\rV^2,
\end{equation}
We take now a specific point in Fix$(T,P_U)$, namely $\bar y=P_{{\rm Fix}(T,P_U)}(x)$, and rewrite {\bf EB} as
\begin{equation}\label{e42}
\lV x-T(x)\rV^2\ge\omega^2\lV x-\bar y\rV^2.
\end{equation}
Combining \eqref{e41} and \eqref{e42},we get
\begin{equation}\label{e43}
\lV x-\bar y\rV^2\ge\lV x- T(x)\rV^2+\lV T(x)-\bar y\rV^2 
\ge\omega^2\lV x-\bar y\rV^2+\lV T(x)-\bar y\rV^2.
\end{equation}
Rearranging \eqref{e43}, we conclude that
\begin{equation}\label{e44} 
(1-\omega^2) \lV x-\bar y\rV^2\ge\lV T(x)-\bar y\rV^2.
\end{equation}

Note that $\langle T(x)-S(x),\bar y-T(x)\rangle=\langle T(x)-P_U(T(x)),\bar y-T(x)\rangle\le 0$, by an elementary property of 
orthogonal projections, since $\bar y\in U$.
Hence,
\begin{equation}\label{e45}
\lV T(x)-\bar y\rV^2\ge\lV T(x)-S(x) \rV^2+\lV S(x)-\bar y\rV^2.
\end{equation}
Let $\hat y=P_{{\rm Fix}(T,P_U)}(S(x))$.
From \eqref{e44} and \eqref{e45} we obtain
\begin{equation}\label{e46}
(1-\omega^2)\lV x-\bar y\rV^2\ge\lV T(x)-\bar y\rV^2\ge\lV T(x)-S(x)\rV^2+\lV S(x)-\bar y\rV^2
\ge\lV S(x)-\bar y\rV^2\ge\lV S(x)-\hat y\rV^2,
\end{equation}
where the second inequality holds by \eqref{e45} and the last one follows from the definition of orthogonal projection. 
From \eqref{e46} we conclude, recalling the definitions of $\bar y,\hat y$, that
\begin{equation}\label{e47}
{\rm dist}(S(x),{\rm Fix}(T,P_U))^2\le(1-\omega^2){\rm dist}(x,{\rm Fix}(T,P_U))^2,
\end{equation}
which shows that the second inequality in \eqref{e40} holds. Next we look at the first one.  
Let $\tilde y=P_{{\rm Fix}(T,P_U)}(C(x))$. We have that
\begin{equation}\label{e48}
\lV C(x)-\tilde y\rV^2\le\lV C(x)-\hat y\rV^2\le\lV S(x)-\hat y\rV^2
\le\lV S(x)-\bar y\rV^2
\le(1-\omega^2)\lV x-\bar y\rV^2,
\end{equation}
where the first and the third inequality hold by the definition of orthogonal projection, 
the second one from Proposition \ref{p11}(i) and the last one holds by \eqref{e46}. 
Note that the first inequality in \eqref{e40} follows immediately from \eqref{e48}, in view of the definitions of $\tilde y, \bar y$.
\end{proof}

 \begin{Cor}\label{c4}
 Under the assumptions of Proposition \ref{p12}, let $\{z^k\},\{x^k\}$ be the sequences generated by MAP and CRM respectively,
for solving FPP$(T,P_U)$, i.e.,  $z^{k+1}=S(z^k),$ and $x^{k+1}=C(x^k),$ starting from some $z^0\in \re^n$ and $x^0\in U$. 
Then the scalar sequences $\{a^k\}, \{b^k\}$, defined as $a^k={\rm dist}(z^k,{\rm Fix}(T,P_U))$ and $b^k={\rm dist}(x^k,{\rm Fix}(T,P_U))$, 
converge Q-linearly to zero with asymptotic constants bounded above by $\sqrt{1-\omega^2},$ with $\omega$ as in {\bf EB}. 
 \end{Cor}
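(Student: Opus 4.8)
The plan is to read the statement off directly from the two-sided inequality \eqref{e40} of Proposition \ref{p12}, which is already a one-step contraction estimate for the distance to ${\rm Fix}(T,P_U)$; the corollary is essentially a bookkeeping exercise once one checks that each iterate lies in $U$, where \eqref{e40} is valid. First I would settle this domain issue. For the CRM sequence it is immediate: since $x^0\in U$, Proposition \ref{pp9} applied inductively (exactly as in the proof of Theorem \ref{t1}) gives $x^k\in U$ for every $k$. For the MAP sequence the starting point $z^0$ is only assumed to lie in $\re^n$, but $z^1=S(z^0)=P_U(T(z^0))\in U$, and hence $z^k\in U$ for all $k\ge 1$; this is enough, since the conclusion is asymptotic and the contraction is available from the first iterate on.

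Next I would instantiate \eqref{e40} on each sequence. Applying the outer terms of \eqref{e40} with $x=x^k\in U$ and using $x^{k+1}=C(x^k)$ yields
\begin{equation*}
(b^{k+1})^2={\rm dist}(C(x^k),{\rm Fix}(T,P_U))^2\le(1-\omega^2)\,{\rm dist}(x^k,{\rm Fix}(T,P_U))^2=(1-\omega^2)(b^k)^2,
\end{equation*}
so that $b^{k+1}\le\sqrt{1-\omega^2}\,b^k$ for all $k\ge 0$. In the same way, applying the second inequality of \eqref{e40} with $x=z^k\in U$ (for $k\ge 1$) and $z^{k+1}=S(z^k)$ gives $(a^{k+1})^2\le(1-\omega^2)(a^k)^2$, hence $a^{k+1}\le\sqrt{1-\omega^2}\,a^k$ for all $k\ge 1$. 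Here I would note that combining {\bf EB} with the firm nonexpansiveness of $T$ forces $\omega\le 1$ (indeed $\lV x-T(x)\rV\le{\rm dist}(x,{\rm Fix}(T,P_U))$ by the estimate \eqref{e41} used in Proposition \ref{p12}), so that $\sqrt{1-\omega^2}$ is a well-defined number in $[0,1)$ and both scalar sequences are dominated by geometric sequences tending to zero. Taking the ratio $c^{k+1}/c^k\le\sqrt{1-\omega^2}$ along the tail, whenever the denominator is nonzero, shows that the asymptotic constant of each sequence is bounded above by $\sqrt{1-\omega^2}$, which is exactly Q-linear convergence at the claimed rate.

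I do not anticipate a genuine obstacle, as the result is a direct corollary of \eqref{e40}; the only two points deserving a word of care are the domain of validity for the MAP sequence, handled by the one-step argument above, and the degenerate situation in which some iterate already lies in ${\rm Fix}(T,P_U)$. In that case the corresponding distance is zero and the sequence is stationary thereafter, which is trivially consistent with the stated bound and leaves the argument unaffected.
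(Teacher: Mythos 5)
Your proposal is correct and follows essentially the same route as the paper: both read the Q-linear contraction for $\{a^k\}$ and $\{b^k\}$ directly off the two-sided inequality \eqref{e40}, after noting that the iterates remain in $U$ (by Proposition \ref{pp9} for CRM and by the definition of $S$ for MAP). Your additional remarks on $\omega\le 1$ and on the degenerate case of an iterate already in ${\rm Fix}(T,P_U)$ are harmless refinements of the same argument.
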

\begin{proof}
It follows from \eqref{e40} that, for all $x\in U$,
\begin{equation}\label{e49}
{\rm dist}(S(x),{\rm Fix}(T,P_U))^2\le(1-\omega^2){\rm dist}(x,{\rm Fix}(T,P_U))^2,
\end{equation}
and  that, for all $z\in U$,
\begin{equation}\label{e50}
{\rm dist}(C(x),{\rm Fix}(T,P_U))^2\le(1-\omega^2){\rm dist}(x,{\rm Fix}(T,P_U))^2,
\end{equation}
In view of the definitions of $\{x^k\}, \{z^k\}$, and remembering that both sequences are contained in $U$, by Proposition \ref{pp9}
in the case of $\{x^k\}$ and by definition of $S$ in the case of $\{z^k\}$, 
we get from \eqref{e49}, \eqref{e50},
\begin{equation}\label{e51}
\frac{{\rm dist}(z^{k+1},{\rm Fix}(T,P_U))}{{\rm dist}(z^k,{\rm Fix}(T,P_U))}\le\sqrt{1-\omega^2},
\end{equation}
\begin{equation}\label{e52}
\frac{{\rm dist}(x^{k+1},{\rm Fix}(T,P_U))}{{\rm dist}(x^k,{\rm Fix}(T,P_U))}\le\sqrt{1-\omega^2}.
\end{equation}
The result follows immediately from \eqref{e51}, \eqref{e52}.
\end{proof}

Note that the results of Corollary \ref{c4} do not entail immediately that the sequences $\{x^k\}, \{z^k\}$ themselves
converge linearly; a sequence $\{y^k\}$ may converge to a point $y\in M\subset\re^n$,
in such a way that $\{{\rm dist}(y^k,M)\}$ converges linearly to $0$ but $\{y^k\}$ itself converges sublinearly.
Take for instance $M=\{(s,0)\in\re^2\}$, $y^k=\left(1/k,2^{-k}\right)$. This sequence converges
to $0\in M$, ${\rm dist}(y^k,M)=2^{-k}$ converges linearly to $0$ with asymptotic constant equal to $1/2$, but
the first component of $y^k$ converges to $0$ sublinearly, and hence the same holds for the sequence
$\{y^k\}$. The next  well known lemma establishes that this situation
cannot occur when $\{y^k\}$ is Fej\'er monotone with respect to $M$, i.e., $\lV y^{k+1}-y\rV\le\lV y^k-y\rV$ for all $y\in M$.

\begin{Lem}\label{l1} 
Consider $M\subset \re^n$, $\{y^k\}\subset\re^n$. Assume that $\{y^k\}$ is Fej\'er
monotone with respect to $M$, and that ${\rm dist}(y^k,M)$ converges R-linearly to $0$. Then $\{y^k\}$ converges 
R-linearly to some point $y^*\in M$, with asymptotic constant bounded above by the 
asymptotic constant of $\{{\rm dist}(y^k,M)\}$.
\end{Lem}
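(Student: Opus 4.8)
The plan is to deduce the convergence of the sequence $\{y^k\}$ itself, together with its rate, from the decay of $\dist(y^k,M)$, by exploiting Fej\'er monotonicity to control the entire tail of the sequence by a single distance term. The key inequality I aim to establish is
\begin{equation*}
\|y^k-y^*\|\le 2\,\dist(y^k,M)\qquad\text{for all }k,
\end{equation*}
where $y^*$ is the (yet to be produced) limit. Once this is in hand, both the R-linear convergence and the comparison of asymptotic constants fall out at once.

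First I would produce the limit and show it lies in $M$. For each $k$ choose $q^k\in M$ with $\|y^k-q^k\|=\dist(y^k,M)$ (such a nearest point exists because $M$ is closed; in our application $M=\mathrm{Fix}(T,P_U)$, which is closed). Fej\'er monotonicity applied with the anchor $y=q^k\in M$ gives $\|y^{k+p}-q^k\|\le\|y^k-q^k\|=\dist(y^k,M)$ for every $p\ge 0$, whence by the triangle inequality $\|y^{k+p}-y^k\|\le 2\,\dist(y^k,M)$. Since $\dist(y^k,M)\to 0$, the sequence $\{y^k\}$ is Cauchy and converges to some $y^*$. Because $x\mapsto\dist(x,M)$ is $1$-Lipschitz, $\dist(y^*,M)=\lim_k\dist(y^k,M)=0$, and closedness of $M$ yields $y^*\in M$.

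Next I would let $p\to\infty$ in the inequality $\|y^{k+p}-q^k\|\le\dist(y^k,M)$ to obtain $\|y^*-q^k\|\le\dist(y^k,M)$; combining this with $\|y^k-q^k\|=\dist(y^k,M)$ through the triangle inequality gives the target bound $\|y^k-y^*\|\le 2\,\dist(y^k,M)$. Writing $\rho=\limsup_{k\to\infty}\dist(y^k,M)^{1/k}$ for the asymptotic constant of $\{\dist(y^k,M)\}$ and using $2^{1/k}\to 1$, this bound yields $\limsup_{k\to\infty}\|y^k-y^*\|^{1/k}\le\rho<1$, which is exactly R-linear convergence of $\{y^k\}$ with asymptotic constant bounded above by $\rho$.

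I expect no serious obstacle, as this is essentially a classical fact; the only points requiring care are the existence of the nearest points $q^k$ (handled by closedness of $M$) and keeping the quantifier on $p$ straight, so that the passage $p\to\infty$ is legitimate --- Fej\'er monotonicity must be invoked with the \emph{fixed} anchor $q^k\in M$ while the running index $k+p$ tends to infinity. A secondary point is to match the precise notion of R-linear convergence and its asymptotic constant, so that the harmless factor $2$ (and, more generally, any subexponential prefactor) does not affect the $\limsup$ of the $k$-th roots.
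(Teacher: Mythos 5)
Your proof is correct. The paper does not prove this lemma itself but defers to Lemma 1 of \cite{ABBIS}, and your argument is exactly the standard one given there: Fej\'er monotonicity with the anchors $q^k=P_M(y^k)$ yields the key bound $\lV y^k-y^*\rV\le 2\,\dist(y^k,M)$, from which the Cauchy property, the identification of the limit, and the transfer of the R-linear rate (the factor $2$ being washed out in the $\limsup$ of $k$-th roots) all follow; you also correctly flag the one implicit hypothesis, closedness of $M$, which is needed for the nearest points and for $y^*\in M$ and which holds in the application to $M={\rm Fix}(T,P_U)$.
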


\begin{proof}
See, e.g., Lemma 1 in \cite{ABBIS}.
\end{proof}

We show next that the sequences $\{x^k\}$ and $\{z^k\}$ are  R-linearly convergent 
under Assumption {\bf EB}, with asymptotic constants bounded by $\sqrt{1-\omega^2}$, where $\omega$ is the {\bf EB} parameter.

\begin{Thm}\label{t2}
Let $T:\re^n\to\re^n$ be a firmly nonexpansive operator and $U\subset\re^n$ is an affine manifold. Assume that Fix$(T,P_U)\ne\emptyset$ and that 
condition {\bf EB} Holds.  Consider the sequences $\{z^k\},\{x^k\}$ generated by MAP and CRM respectively, for solving ${\rm Fix}(T,P_U)$, 
i.e., $x^{k+1}=S(x^k)$ and $z^{k+1}=C(z^k)$, 
starting from some $z^0\in \re^n$ and some $x^0\in U$. Then both sequences converge R-linearly to points in ${\rm Fix}(T,P_U)$, with asymptotic constants 
bounded above by $\sqrt{1-\omega^2},$ with $\omega$ as in assumption {\bf EB}.
\end{Thm}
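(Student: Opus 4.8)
The plan is to assemble the theorem from the one-step estimates already in place: the Q-linear decay of the distance-to-solution sequences secured in Corollary \ref{c4}, together with the Fej\'er monotonicity of $\{z^k\}$ and $\{x^k\}$ with respect to $\mathrm{Fix}(T,P_U)$, and then to invoke Lemma \ref{l1} to upgrade the decay of the distances into R-linear convergence of the iterates themselves.

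First I would record that, by Corollary \ref{c4}, the scalar sequences $\{\mathrm{dist}(z^k,\mathrm{Fix}(T,P_U))\}$ and $\{\mathrm{dist}(x^k,\mathrm{Fix}(T,P_U))\}$ converge Q-linearly to zero with asymptotic constant at most $\sqrt{1-\omega^2}$. Since Q-linear convergence implies R-linear convergence with the same (or a smaller) asymptotic constant, both distance sequences converge R-linearly to zero, with asymptotic constants bounded above by $\sqrt{1-\omega^2}$. This supplies the first hypothesis required by Lemma \ref{l1}, for each of the two sequences, taking $M=\mathrm{Fix}(T,P_U)$.

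Next I would verify the second hypothesis of Lemma \ref{l1}, namely Fej\'er monotonicity of each sequence with respect to $\mathrm{Fix}(T,P_U)$. For the CRM sequence $\{x^k\}$ this is exactly the content of Corollary \ref{c3}, which gives $\lV x^{k+1}-y\rV^2\le\lV x^k-y\rV^2-\lV S(x^k)-x^k\rV^2\le\lV x^k-y\rV^2$ for every $y\in\mathrm{Fix}(T,P_U)$; in particular $\lV x^{k+1}-y\rV\le\lV x^k-y\rV$, as required. For the MAP sequence $\{z^k\}$ the same conclusion follows from inequality \eqref{et17} in the proof of Proposition \ref{p8}, which already established $\lV z^{k+1}-y\rV^2\le\lV z^k-y\rV^2$ for every $y\in\mathrm{Fix}(T,P_U)$.

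Finally, having checked both hypotheses, I would apply Lemma \ref{l1} separately to $\{z^k\}$ and to $\{x^k\}$, with $M=\mathrm{Fix}(T,P_U)$, obtaining R-linear convergence of each sequence to some point of $\mathrm{Fix}(T,P_U)$, with asymptotic constant bounded above by that of the corresponding distance sequence, hence by $\sqrt{1-\omega^2}$ in both cases. There is no serious obstacle: the substantive work, namely the one-step contraction estimate and the Fej\'er inequalities, was already carried out in Proposition \ref{p12}, Corollary \ref{c4}, Corollary \ref{c3}, and Proposition \ref{p8}, so the present theorem is essentially a bookkeeping assembly of these facts through Lemma \ref{l1}. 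The only point requiring a modicum of care is the elementary passage from Q-linear to R-linear decay of the distances (needed because Lemma \ref{l1} is phrased in terms of R-linear decay), together with faithfully tracking that the bound $\sqrt{1-\omega^2}$ on the asymptotic constant is preserved through each of these steps.
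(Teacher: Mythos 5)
Your proposal is correct and follows essentially the same route as the paper's own proof: Corollary \ref{c4} for the Q-linear (hence R-linear) decay of the distance sequences, Corollary \ref{c3} and inequality \eqref{et17} for Fej\'er monotonicity, and Lemma \ref{l1} to conclude. The only cosmetic difference is that the paper additionally cites Theorem \ref{t1} for convergence of the iterates, which is in any case subsumed by the conclusion of Lemma \ref{l1}.
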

\begin{proof}
By Corollary \ref{c4}, both scalar sequences $a^k={\rm dist}(z^k,{\rm Fix}(T,P_U))$ and $b^k={\rm dist}(x^k,{\rm Fix}(T,P_U))$ are Q-linearly convergent to $0$ with asymptotic constant bounded above by  $\sqrt{1-\omega^2}<1$, and hence
R-linearly convergent to 0, with the same asymptotic constant. By Corollary \ref{c3}, the sequence $\{x^k\}$ is Fej\'er 
monotone with respect to Fix$(T,P_U)$, and the same holds for the sequence $\{z^k\}$, in view of \eqref{et17}. By
Theorem \ref{t1}, both sequences converge to points in Fix$(T,P_U)$. Finally, by Lemma \ref{l1}, both sequences converge 
R-linearly convergent to their limit points in the intersection, with asymptotic constants bounded by $\sqrt{1-\omega^2}$.
\end{proof}

We mention that in \cite{ABBIS} we showed that for CFP under EB, CRM achieves an asymptotic constant of linear convergence better than MAP. 
We have not been able to prove such superiority in the case of FPP. However, the numerical results exhibited in Section \ref{s5} strongly suggest that
the asymptotic constant of CRM is indeed better than the MAP one. The task of establishing such theoretical superiority is left as an open problem.

\section{Numerical experiments}\label{s5} 

We report here numerical comparisons between CRM and PPM for solving FPP with $p$ firmly nonexpansive operators.

All operators in this section belong to the family studied in Section \ref{s2}, i.e., they are convex combinations of orthogonal
projections onto a finite number of closed and convex sets with nonempty intersection. In view of Proposition \ref{p3}(ii),
these operators are ensured to have fixed points. Hence, in view of Proposition \ref{p5} they are not orthogonal projections themselves. 

The construction of the problems is as follows: for each instance we choose randomly a number $r\in\{3,4,5\}$ ($r$ is the number of convex sets in the 
convex combination). Then we sample values $\lambda_1, \dots, \lambda_r\in(0,1)$ with uniform distribution. We define $\mu_i=\lambda_i/(\sum_{\ell=1}^r)$, 
and we take the firmly nonexpansive operator $T$ as $T=\sum_{i=1}^r\mu_iP_{\mathcal{E}_i}$, where $\mathcal{E}_i$ is an ellipsoid and $P_{\mathcal{E}_i}$
is the orthogonal projection onto it. 

The ellipsoid  $\mathcal{E}_i$ is of the form
$\mathcal{E}_i:=\{x\in\re^n:g_i(x)\le 0\}$, where $g_i:\re^n\to\re$ is given as $g_i(x)= x^t A_ix +2 (b^i)^tx-\alpha_i$, 
with $A_i\in\re^{n\times n}$ symmetric positive definite, $b^i\in\re^n$ and $0<\alpha_i\in\re$. 
 Each matrix $A_i$ is of the form $A_i = \gamma I+B_i^\top B_i$, with  $B_i \in\re^{n\times n}$, $\gamma \in \re_{++}$, where $I$ stands for the identity matrix.  The matrix $B_i$ is a sparse matrix sampled from the standard normal distribution  with sparsity density  $p=2 n^{-1}$  and each vector $b^i$ is sampled from the uniform distribution between $[0,1]$. We then choose each $\alpha_i$ so that $\alpha_i > (b^i)^\top Ab^i$, which ensures that $0$ belongs to every $\mathcal{E}_i$, so that the intersection of the ellipsoids is nonempty. As explained above, this ensures that each instance of FPP has solutions. 

In order to compute the projection onto the ellipsoids we use a version of the Alternating Direction Method
of Multipliers (ADMM) suited for this purpose; see \cite{JCH}. The stopping criterion for ADMM
is as follows: we stop the ADMM iterative process when
the norm of the difference between 2 consecutive ADMM iterates 
is less than $10^{-8}$. We also fix a maximum number of $10\, 000$ ADMM iterations. 

For CRM, we use Pierra's product space reformulation, as explained in Section \ref{s1}. We implement PPM directly from its definition (see Section \ref{s1}).
The stopping criterion for both CRM and PPM is similar to the one for the ADMM subroutine, but with a different tolerance:
the iterative process stops when
the norm of the difference between 2 consecutive CRM or PPM iterates 
is less than $10^{-6}$. The maximum number of iterations is fixed at $50\, 000$ for both algorithms. 

The experiments consists of solving, with CRM and PPM, $250$ instances of FPP selected 
as follows. We consider the following values for the dimension $n$:
$\{10,30, 50, 100, 200\}$, and for each $n$ we take $p$ firmly nonexpansive operators with $p\in\{10,25,50,100,200\}$. For each of these 25 pairs $(n,p)$,
we randomly generate 10 instances of FPP with the above explained procedure.

The initial point $x^0$ is of the form $(\eta,\dots,\eta)\in\re^n$, with $\eta<0$  and $\lv\eta\rv$ sufficiently large so as to guarante  
that $x^0$ is far from all the ellipsoids.

The computational experiments were carried out
on an Intel Xeon W-2133 3.60GHz with 32GB of RAM running Ubuntu 20.04. We implemented all 
experiments in Julia programming language v1.6 (see \cite{BEKS}). The codes of our experiments are fully available 
at: \url{
https://github.com/Mirza-Reza/FPP}

We report in Table \ref{table:fneellipsoidd_all} the following descriptive statistics for CRM and PPM: mean, maximum (max), minimum (min) 
and standard deviation (std) for iteration count (it) and CPU time in seconds (CPU (s)). In particular, the ratio of the CPU time 
(in average for all instances) of PPM with respect to CRM 
is $7.69$, meaning that CRM is, on the average, almost eight times faster that PPM.

\begin{table}[ht]
  \centering
  \caption{Statistics for all instances, reporting number of iterations and CPU time}
  \label{table:fneellipsoidd_all}
  \sisetup{
table-parse-only,
table-figures-decimal = 4,
table-format = +3.4e-1,
table-auto-round,
%
}
\bigskip
\begin{tabular}{lrSccS}

\toprule  \textbf{Method} & &   {\textbf{mean}} &  {\textbf{max}} &   {\textbf{min}} &   {\textbf{std}} \\
\cmidrule(lr){3-6}
CRM & \texttt{it} & 144.288   & 554           &23          & 95.2581   \\
 & \texttt{CPU(s)} & \num{14.6048} & \num{120.3020}  & \num{0.2729}   &\num{ 22.4890}  \\
\cmidrule(lr){2-6}
PPM &  \texttt{it} & 5977.352   & 25000   &209       & 6385.9388   \\
 & \texttt{CPU(s)} & \num{ 112.3315} &  1085.9685& \num{1.2483}   & \num{190.3078}   \\

\bottomrule
\end{tabular}
\end{table}

We report next similar statistics, but separately for each dimension $n$.
Looking at Table \ref{table:fneellipsoid_n}, we observe that the CPU time for PPM grows linearly with the dimension $n$, while 
the growth of the CRM CPU time is somewhat higher than linear. As a consequence, the superiority of CRM over PPM, measured in terms 
of the quotient between the PPM CPU time and the CRP CPU time, is slightly decreasing with $n$: it goes from a ratio of  $9.17$
for $n=10$ to a ratio of $7.56$ for $n=200$. This said, it is clear that CRM vastly outperforms PPM in terms of CPU time for 
all the values of $n$ tested in our experiments.   

\newpage

\begin{table}[ht]
  \centering
  \caption{Statistics for instances of each dimension $n$, reporting number of iterations and CPU time}
  \label{table:fneellipsoid_n}
  \sisetup{
table-parse-only,
table-figures-decimal = 2,
table-format = +3.4e-1,
table-auto-round,
%
}
\bigskip
\begin{tabular}{lcSccS}

\toprule
\textbf{Method} & &   {\textbf{mean}} &  {\textbf{max}} &   {\textbf{min}} &   {\textbf{std}} \\
\cmidrule(lr){3-6}
CRM & \texttt{it} & 141.84   & 512          & 28          & 99.10284758774593    \\
$n=10$& \texttt{CPU(s)} & \num{2.3247} & \num{6.8150}  & \num{0.2729}   &\num{  1.9756}  \\
\cmidrule(lr){2-6}
PPM &  \texttt{it} & 6024.54  & 19163    &209     & 6425.574393655403  \\
 $n=10$& \texttt{CPU(s)} & \num{21.3369} &  92.19569& \num{1.2483}   & \num{22.4132}   \\
\bottomrule
CRM & \texttt{it} & 153.5  & 526           & 46           & 92.21502046846815   \\
 $n=30$& \texttt{CPU(s)} & \num{ 4.6989} & \num{16.6523}  & \num{0.7607}   &\num{ 4.1754}  \\
\cmidrule(lr){2-6}
PPM &  \texttt{it} & 5608.44  & 18353    &500       &  5956.758800421585   \\
 $n=30$& \texttt{CPU(s)} & \num{42.9296} &  174.9737& \num{2.9861}   & \num{46.2969}   \\
\bottomrule
\\
CRM & \texttt{it} &129.5   & 469          & 23         & 91.70523431080693    \\
 $n=50$& \texttt{CPU(s)} & \num{6.8152} & \num{17.1668}  & \num{1.0480}   &\num{  5.0391}  \\
\cmidrule(lr){2-6}
PPM &  \texttt{it} & 5288.52  & 24680     &423       & 5548.505204971876  \\
 $n=50$& \texttt{CPU(s)} & \num{53.3709} &  222.7307& \num{3.5744}   & \num{55.7054}   \\
\bottomrule
\\
CRM & \texttt{it} & 152.04  & 399          & 28          & 84.19238920472563   \\
 $n=100$& \texttt{CPU(s)} & \num{15.5937} & \num{41.2581}  & \num{1.9661}   &\num{  12.4246}  \\
\cmidrule(lr){2-6}
PPM &  \texttt{it} & 7224.42   & 21978   &540       & 7663.860453035403  \\
 $n=100$& \texttt{CPU(s)} & \num{114.4037} &  428.8247& \num{6.3108}   & \num{108.4765}   \\
\bottomrule
\\
CRM & \texttt{it} & 144.56  & 554           & 42           & 105.72438886084895    \\
 n=200& \texttt{CPU(s)} & \num{43.5915} & \num{ 120.3019}  & \num{ 5.0157}   &\num{ 34.3053}  \\
\cmidrule(lr){2-6}
PPM &  \texttt{it} & 5740.84   & 22378      &370     & 5948.570740472034   \\
 n=200& \texttt{CPU(s)} & \num{  329.6167} &  1085.9685& \num{19.0842}   & \num{315.8783}   \\
\bottomrule
\end{tabular}
\end{table}

Next, we report in the next table similar statistics, but separately for problems involving $p$ firmly nonexpansive operators,
for each value of $p$.
Table \ref{table:fneellipsoid-Random p} indicates that both the CRM and the PPM CPU time grow slightly less that linearly in $p$, 
the number of firmly nonexpansive operators in each instance of FPP, but the 
growth in both cases seems to become linear for $p\ge 50$. Consistently with this behavior, the ratio between the PPM CPU time and the the CRM CPU 
time is about $3$ for $p=10,25$ and about $8$ for $p=50,100,200$. Again, for all values of $p$, CRM turns out to be highly better than PPM 
in terms of CPU time.

\newpage

\begin{table}[ht]
  \centering
  \caption{Statistics for instances of FPP problems with $p$ firmly nonexpansive operators, reporting number of iterations and CPU time}
  \label{table:fneellipsoid-Random p}
  \sisetup{
table-parse-only,
table-figures-decimal = 2,
table-format = +3.4e-1,
table-auto-round,
%
}
\bigskip
\begin{tabular}{lcSccS}

\toprule
\textbf{Method} & &   {\textbf{mean}} &  {\textbf{max}} &   {\textbf{min}} &   {\textbf{std}} \\
\cmidrule(lr){3-6}
fneCRM & \texttt{it} & 91.0  & 263         & 28         & 50.174495513158874   \\
 $p=10$& \texttt{CPU(s)} & \num{ 2.8569} & \num{13.1619}  & \num{0.2729}   &\num{   2.8807}  \\
\cmidrule(lr){2-6}
PPM &  \texttt{it} & 1316.68  & 6765   &209    & 1264.5452849147  \\
 $p=10$& \texttt{CPU(s)} & \num{ 13.1578} &  50.8767& \num{1.2483}   & \num{11.4271}   \\
\bottomrule

CRM & \texttt{it} & 113.7  & 469       & 36          & 83.5955142337195  \\
 $p=25$& \texttt{CPU(s)} & \num{  6.5062} & \num{ 45.2021}  & \num{0.6664}   &\num{ 8.9416}  \\
\cmidrule(lr){2-6}
PPM &  \texttt{it} &2865.92 & 14617   &650     &   2651.0789036918536  \\
$p=25$& \texttt{CPU(s)} & \num{34.6541} &  242.2805& \num{ 2.9785}   & \num{ 47.5093}   \\
\bottomrule
\\
CRM & \texttt{it} &128.8  & 331        & 23        & 76.92437845052763   \\
 $p=50$& \texttt{CPU(s)} & \num{  10.43880} & \num{46.8045}  & \num{1.3100}   &\num{  11.8677}  \\
\cmidrule(lr){2-6}
PPM &  \texttt{it} & 4949.42 & 25000   & 870     & 5531.401251364793 \\
 $p=50$& \texttt{CPU(s)} & \num{ 88.4859} &  602.8599& \num{6.5347}   & \num{125.0821}   \\
\bottomrule
\\
CRM & \texttt{it} & 166.28  & 526         & 49         &  91.18882387661331  \\
 $p=100$& \texttt{CPU(s)} & \num{ 18.8065} & \num{70.6532}  & \num{ 2.4265}   &\num{  20.1719}  \\
\cmidrule(lr){2-6}
PPM &  \texttt{it} & 7077.46  & 25000   & 1586       & 4970.777481279966  \\
 $p=100$& \texttt{CPU(s)} & \num{143.0699} &   729.1966& \num{12.0125}   & \num{ 171.2874}   \\
\bottomrule
\\
CRM & \texttt{it} & 221.66 & 554         & 88         & 105.57890130134903    \\
 $p=200$& \texttt{CPU(s)} & \num{ 34.4157} & \num{ 120.3019}  & \num{ 4.7277}   &\num{  35.5202}  \\
\cmidrule(lr){2-6}
PPM &  \texttt{it} & \num{13677.28 } & 25000      &  4015    & 6856.3914  \\
 $p=200$& \texttt{CPU(s)} &    282.2900 &  1085.9685 & \num{ 31.8832}   & \num{   295.7094}   \\
\bottomrule
\end{tabular}
\end{table}

Finally, we exhibit the performance profile, in the sense of \cite{DoM}, for all the instances. Again, the superiority of CRM with respect
to PPM is fully corroborated. 

\begin{figure}[ht]
    \centering
    \includegraphics[scale=0.9]{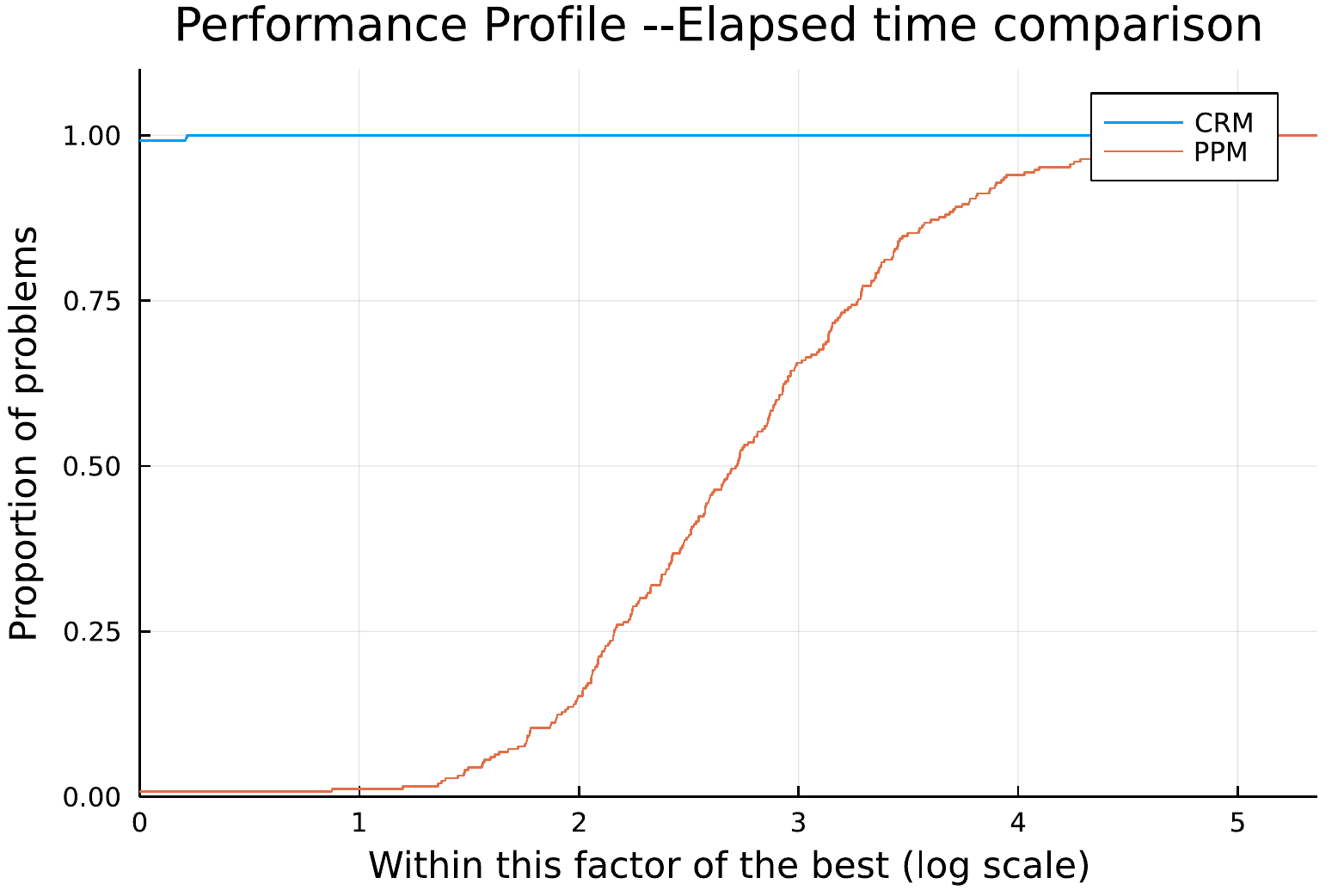}
    \caption{Performance proﬁle of experiments with ellipsoidal feasibility – CRM vs PPM}
    \label{fig:performance-profile-ppm}
\end{figure}

 \newpage

\end{document}